%% filename: amsart-template.tex
%% version: 1.1
%% date: 2014/07/24
%%
%% American Mathematical Society
%% Technical Support
%% Publications Technical Group
%% 201 Charles Street
%% Providence, RI 02904
%% USA
%% tel: (401) 455-4080
%%      (800) 321-4267 (USA and Canada only)
%% fax: (401) 331-3842
%% email: tech-support@ams.org
%% 
%% Copyright 2008-2010, 2014 American Mathematical Society.
%% 
%% This work may be distributed and/or modified under the
%% conditions of the LaTeX Project Public License, either version 1.3c
%% of this license or (at your option) any later version.
%% The latest version of this license is in
%%   http://www.latex-project.org/lppl.txt
%% and version 1.3c or later is part of all distributions of LaTeX
%% version 2005/12/01 or later.
%% 
%% This work has the LPPL maintenance status `maintained'.
%% 
%% The Current Maintainer of this work is the American Mathematical
%% Society.
%%
%% ====================================================================

%     AMS-LaTeX v.2 template for use with amsart
%
%     Remove any commented or uncommented macros you do not use.

\documentclass[reqno]{amsart}
\usepackage{url}
\newtheorem{theorem}{Theorem}[section]
\newtheorem{lemma}[theorem]{Lemma}
\newtheorem{corollary}[theorem]{Corollary}
\usepackage[english]{babel}
\usepackage[utf8]{inputenc}
\usepackage{amssymb}
\usepackage{graphicx}
\usepackage[colorinlistoftodos]{todonotes}
\usepackage{amsmath}
\usepackage{amsthm}
\usepackage{hyperref}

\theoremstyle{definition}
\newtheorem{definition}[theorem]{Definition}

\theoremstyle{remark}

\numberwithin{equation}{section}
\usepackage{algorithmic}
\raggedbottom
\usepackage[top=3cm,bottom=2cm,right=2cm,left=2cm]{geometry}

\begin{document}

\title{A Method of Verifying Partition Congruences by Symbolic Computation}

%    Remove any unused author tags.

%    author one information
\author{Cristian-Silviu Radu}
\address{}
\curraddr{}
\email{}
\thanks{}

%    author two information
\author{Nicolas Allen Smoot}
\address{}
\curraddr{}
\email{}
\thanks{}

\keywords{}

\date{}

\dedicatory{}

\maketitle

\begin{abstract}
Conjectures involving infinite families of restricted partition congruences can be difficult to verify for a number of individual cases, even with a computer.  We demonstrate how the machinery of Radu's algorithm may be modified and employed to efficiently check a very large number of cases of such conjectures.  This allows substantial evidence to be collected for a given conjecture, before a complete proof is attempted.
\end{abstract}

\begin{align*}
&\textit{``...for certain things first became clear to me by a}\\ 
&\textit{ mechanical method, although they had to be}\\
&\textit{ demonstrated by geometry afterwards because their}\\
&\textit{ investigation by the said method did not furnish an}\\
&\textit{ actual demonstration. But it is of course easier, when}\\
&\textit{ we have previously acquired... some knowledge of the}\\
&\textit{ questions, to supply the proof than it is to find it}\\
&\textit{ without any previous knowledge."}\\
&\ \ \ \ \ \ \ \ \ \ \ \ \ \ \ \ \ \ \ \ \ \ \ \ \ \ \ \ \ \ \ \ \ \ \ \ \text{---Archimedes, }\textit{The Method}
\end{align*}

\section{Introduction}

In a recent paper, one of the authors demonstrated \cite{Smoot} the proof of a conjecture of Choi, Kim, and Lovejoy:

\begin{theorem}
Let $R_l(n)$ be the number of partitions of $m$ containing a subpartition of length $l$ in which the parts are nonrepeating, nonconsecutive, and larger than all remaining parts of the partition.  If $A_1(n) = \sum_{l\ge 1}l\cdot R_l(n)$, and\\ $24n\equiv 1\pmod{5^{2\alpha}}$, then
\begin{align*}
A_1(n)\equiv 0\pmod{5^{\alpha}}.
\end{align*}
\end{theorem}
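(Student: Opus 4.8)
\emph{Proof plan.} The strategy is the ``localization'' method for modular-function congruences, adapted to the operator-theoretic framework of Radu's algorithm. First I would make the generating function explicit: by the work of Choi--Kim--Lovejoy one has a product formula for $\sum_{n\ge 0}A_1(n)q^{n}$, and after the substitution $q\mapsto q^{24}$ together with multiplication by a suitable rational power of $q$ and division by a fixed eta quotient, this becomes a modular function $f(\tau)$ of weight $0$ on a suitable $\Gamma_0(N)$ with $5\mid N$. Under $q\mapsto q^{24}$ the congruence classes $24n\equiv 1\pmod{5^{2\alpha}}$ become the exponent classes picked out by an iterated $U_5$ operator, so the theorem is equivalent to divisibility by $5^{\alpha}$ of all coefficients of a certain subseries of $f$.

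Next I would realise the family as the orbit of a single operator. Let $U=U_5$ act by $\sum a(n)q^n\mapsto\sum a(5n)q^n$, and let $\phi$ be a fixed eta quotient (a ``level corrector'') chosen so that $\mathcal U\colon g\mapsto U(\phi\,g)$ carries modular functions on $\Gamma_0(N)$ to modular functions on $\Gamma_0(N)$. Since passing from modulus $5^{2\alpha}$ to $5^{2\alpha+2}$ involves \emph{two} applications of $U_5$, the natural operator advancing the parameter is a composition $\mathcal V$ of two such $U_5$-type maps; setting $L_0$ to be the relevant piece of $f$ and $L_\alpha:=\mathcal V(L_{\alpha-1})$, each $L_\alpha$ is, up to a known unit times a power of $q$, the subseries of $f$ over $24n\equiv 1\pmod{5^{2\alpha}}$, and the claim becomes $L_\alpha\in 5^{\alpha}\,\mathbb Z[[q]]$.

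The heart of the argument is a finite-module structure for the ambient space. Let $S$ be a small, explicitly determined set of cusps of $X_0(N)$ and let $\mathcal M$ be the ring of modular functions on $\Gamma_0(N)$ holomorphic outside $S$; using the valence formula on $X_0(N)$ one shows $\mathcal M$ is a free module of finite rank $d$ over $\mathbb Z[t]$, where $t$ is a Hauptmodul-type function with a pole in $S$, and one writes down an explicit basis $m_1,\dots,m_d$ of eta quotients. Two facts must then be proved by a direct but delicate cusp analysis --- which is precisely what the modified Radu algorithm automates: (i) $\mathcal V(\mathcal M)\subseteq\mathcal M$, requiring pole-order bounds for $\mathcal V(m_i)$ at every cusp of $S$ after the coset action induced by $U_5$; and (ii) after conjugating the matrix $A\in\mathrm{Mat}_d(\mathbb Z[t])$ of $\mathcal V$ in the basis $(m_i)$ by a diagonal rescaling $\mathrm{diag}(5^{e_1},\dots,5^{e_d})$ matched to the pole-order filtration of $\mathcal M$, every entry of the resulting matrix $\widetilde A$ lies in $5\,\mathbb Z[t]$, i.e.\ has $5$-adic valuation $\ge 1$. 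Granting (i)--(ii): express $L_0$ (and $L_1$) in the basis by matching finitely many $q$-coefficients, observe that $L_\alpha$ has coordinate vector $A^{\alpha}v_0$, and run an induction on $\alpha$ using (ii) to conclude that the coordinates of $L_\alpha$ have $5$-adic valuation $\ge\alpha$; translating back through the first two steps gives $A_1(n)\equiv 0\pmod{5^{\alpha}}$ whenever $24n\equiv 1\pmod{5^{2\alpha}}$, with the base case $\alpha=1$ checked directly against the definition of $A_1$.

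The main obstacle is step (ii). A naive pole-order bound typically yields only divisibility by $5^{\lceil\alpha/2\rceil}$, since the hypothesis carries the modulus $5^{2\alpha}$ while the conclusion asserts only $5^{\alpha}$; recovering the ``missing'' factor of $5$ at each step forces one to choose the cusp set $S$, the basis $(m_i)$, and the rescaling exponents $(e_i)$ so delicately that $\widetilde A$ is \emph{exactly} $5$-integral, with no slack to spare. Proving that such a choice exists --- and certifying it symbolically, since $d$ and the degrees of the entries of $A$ may be sizeable --- is where essentially all of the difficulty lies; the generating-function identity, the check that $\mathcal V$ preserves the level, and the closing induction are then routine.
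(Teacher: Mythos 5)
Your overall strategy---translate to a weight-zero modular function on a suitable $\Gamma_0(N)$, advance the parameter with $U_5$-type operators twisted by a fixed eta quotient, and run a $5$-adic induction inside a finitely generated space of modular functions with poles confined to prescribed cusps---is indeed the shape of the actual proof, which this paper does not give but cites (the proof of Theorem 2, equivalent to the statement, is in the reference by Smoot, following Paule--Radu). The present paper only develops a verification algorithm and, in Section 6, explains why that machinery alone cannot furnish a proof; that discussion is exactly where your plan breaks down.

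The gap is your step (ii). First, $U_5$ (hence $\mathcal{V}$) is not $\mathbb{Z}[t]$-linear: one only has $U_5\left(t(q^{5})g\right)=t\,U_5(g)$, not $U_5\left(t(q)g\right)=t\,U_5(g)$, so ``the matrix $A\in\mathrm{Mat}_d(\mathbb{Z}[t])$ of $\mathcal{V}$ in the basis $(m_i)$'' is not defined; what one actually controls are expansions of $U^{(i)}\left(A^{i}t^{j}g_k\right)$ in $\left<1,g_1,\dots,g_v\right>_{\mathbb{Z}[t,t^{-1}]}$ for every $j\in\mathbb{Z}$, with $5$-adic gains depending on $j$. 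Second, and more fundamentally, the uniform contraction you posit---a constant diagonal rescaling after which every entry is divisible by $5$, so that every element of the ambient module gains a factor of $5$ per application of $\mathcal{V}$ up to a bounded loss---cannot hold: Section 6 computes the iterates $T_\alpha$ of the Hauptmodul-type generator $T$ under the very operators $U^{(0)},U^{(1)}$ and finds them eventually periodic and nonzero modulo $5$, so no bounded rescaling makes the full module $5$-adically contracted; this is precisely why the authors state their method ``is not capable of providing a complete proof.'' The actual proof replaces your uniform matrix condition by something more delicate: carefully constructed rank-two subspaces $S_\alpha=\left<1,p_\alpha\right>_{\mathbb{Z}[t]}$ with specially chosen $p_0,p_1,t$ such that $L_\alpha\in S_\alpha$, $U^{(\alpha)}\left(p_\alpha\right)\in S_{\alpha+1}$, and the gain of a power of $5$ occurs only at the $U^{(1)}$ steps, certified by explicit $j$-dependent congruence bounds for the coefficients of $U^{(i)}\left(t^{j}p_i\right)$ rather than by entrywise divisibility of a single transition matrix. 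So you have located the correct framework and correctly flagged where the difficulty lies, but the mechanism you propose for closing the induction is one that provably fails on the natural ambient module, and the missing construction is the substance of the cited proof.
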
  This theorem can be shown to be equivalent to the following:

\begin{theorem}
Let
\begin{align}
\displaystyle\sum_{n=0}^{\infty}a(n)q^n = \prod_{m=1}^{\infty}\frac{(1-q^{2m})^5}{(1-q^m)^3(1-q^{4m})^2}.\label{gena}
\end{align}

If $24n\equiv 1\pmod{5^{2\alpha}}$, then
\begin{align}
a(n)\equiv 0\pmod{5^{\alpha}}.
\end{align}
\end{theorem}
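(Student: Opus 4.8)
\noindent\textit{Proof idea.} The quickest route uses the theorem of Choi, Kim, and Lovejoy above, which may be taken as known: one needs only the generating‑function identity expressing $\sum_{n\ge0}A_1(n)q^n$ in terms of the eta quotient on the right of \eqref{gena} (obtained by manipulating the definition of $R_l(n)$ with standard $q$‑series techniques, up to an elementary normalization), after which the present theorem follows from it. Behind this, however, lies the substantive argument, which I now sketch following \cite{Smoot}. It is convenient first to reformulate: with $q=e^{2\pi i\tau}$ and $\eta$ the Dedekind eta function, the product in \eqref{gena} equals $q^{1/24}\,\eta(2\tau)^5/(\eta(\tau)^3\eta(4\tau)^2)$, and combining this with the classical identity $\eta(2\tau)^5/(\eta(\tau)^2\eta(4\tau)^2)=\varphi(q)$, where $\varphi(q)=\sum_{k\in\mathbb Z}q^{k^2}$, gives $\sum_{n\ge0}a(n)q^n=\varphi(q)\prod_{m\ge1}(1-q^m)^{-1}$, i.e.\ $a(n)=\sum_{k\in\mathbb Z}p(n-k^2)$. (This last identity does not by itself reduce the claim to Ramanujan's congruence for $p$: when $24n\equiv1\pmod{5^{2\alpha}}$, that congruence yields $5^\alpha\mid p(n-k^2)$ only for those $k$ with $5^{\lceil\alpha/2\rceil}\mid k$, so cancellation among the remaining terms is essential.)

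The plan is to pass to modular functions. After the fractional exponent $q^{1/24}$ is absorbed into a level structure in the standard way, $\eta(2\tau)^5/(\eta(\tau)^3\eta(4\tau)^2)$ is a weight‑$0$ modular function on $\Gamma_0(N)$ for a suitable $N$ divisible by $20$. The coefficients $a(n)$ with $24n\equiv1\pmod{5^{2\alpha}}$ are exactly those picked out by applying an appropriate iterate of the $U_5$‑operator to this function, twisted so as to project onto the progression; this produces, for each $\alpha\ge1$, a modular function $L_\alpha$ for $\Gamma_0(N)$ whose $q$‑coefficients are, up to one fixed power of $5$ coming from the normalization, the relevant values $a(n)$. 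The present theorem then becomes the assertion that $5^\alpha$ divides every coefficient of $L_\alpha$.

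Next I would use that $X_0(N)$ has genus $0$: the ring of modular functions for $\Gamma_0(N)$ holomorphic away from the cusp $\infty$ is $\mathbb Z[t]$ for an explicit Hauptmodul $t$ with integral $q$‑expansion. One checks that every $L_\alpha$ lies in a single finitely generated module $\mathcal M$ over the localized ring $\mathbb Z_{(5)}[t]$, fixes an explicit $\mathbb Z_{(5)}[t]$‑basis $m_1,\dots,m_r$ of $\mathcal M$, and then---this is the key structural step---proves a functional equation $L_{\alpha+1}=\mathcal L(L_\alpha)$, where $\mathcal L$ is the $\mathbb Z_{(5)}$‑linear operator ``apply the twisted $U_5$, then multiply by a fixed modular function.'' Establishing this reduces to a single polynomial relation among eta quotients, which can be proved rigorously by comparing finitely many $q$‑coefficients via the valence (Sturm) bound---the point at which Radu's algorithm enters. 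The proof then closes by induction on $\alpha$: one computes the $r\times r$ matrix of $\mathcal L$ in the basis $m_1,\dots,m_r$ (entries in $\mathbb Z_{(5)}[t]$), shows that $\mathcal L$ raises a natural $5$‑adic filtration on $\mathcal M$ by at least $1$, and combines this with a base case ($L_1\equiv0\pmod 5$, verified directly) to conclude $L_\alpha\equiv0\pmod{5^{\alpha}}$ for all $\alpha$.

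The hard part is not any single calculation but the setup: one must find the right Hauptmodul and, above all, the right module $\mathcal M$ together with a basis adapted to the $5$‑adic behaviour of $\mathcal L$. With a naive basis the matrix of $\mathcal L$ has entries of $5$‑adic valuation $0$ and the induction collapses; only a carefully chosen basis, whose members are rescaled by suitable powers of $5$, makes the valuation gain visible. Identifying that basis, and then verifying that the resulting matrix entries really do carry the claimed divisibility, is precisely the kind of symbolic computation this paper is concerned with.
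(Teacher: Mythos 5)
Your sketch is, in outline, the same Watson--Atkin--Paule--Radu style argument that the paper relies on; note, however, that the paper itself does not prove this theorem at all --- it imports it from \cite{Smoot}, and Section 6 only sketches the mechanism of that proof --- so what you have written must be judged as a proof plan, and it has two concrete problems. First, your structural premise is false: the curve on which everything lives is $\mathrm{X}_0(20)$ (and any $N$ with $20\mid N$ gives genus at least that of $\mathrm{X}_0(20)$), which has genus $1$, not $0$; the paper states this explicitly and exploits it via the Weierstrass gap theorem in the proof of Theorem 9. Consequently there is no Hauptmodul, and the functions holomorphic away from $\infty$ do not form $\mathbb{Z}[t]$: the correct statement is $\mathcal{M}^{\infty}(20)_{\mathbb{Q}}=\langle 1,G_1,G_2,G_3,G_4\rangle_{\mathbb{Q}[T]}$ with $\mathrm{ord}_{\infty}^{(20)}(T)=-5$. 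Your subsequent ``finitely generated module over $\mathbb{Z}_{(5)}[t]$'' picture is compatible with the genus-$1$ situation, but the justification you give for it fails, and the choice of $T$ and of the extra generators is itself a nontrivial construction (Theorems 9--10 of the paper).

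The more serious gap is that the entire content of the theorem is concentrated in the one sentence you do not argue: that the iteration operator ``raises a natural $5$-adic filtration by at least $1$,'' achievable by ``a carefully chosen basis, whose members are rescaled by suitable powers of $5$.'' Section 6 of the paper exists precisely to show this is not automatic: iterating $U^{(\alpha)}$ on the natural generator $T$ produces a sequence $(T_{\alpha})$ that becomes periodic modulo $5$ and never gains divisibility, so no rescaling of a generic basis can exhibit the gain. What is actually needed (and what \cite{Smoot} supplies) is the construction of special functions $p_0,p_1,t$ with $L_{\alpha}\in S_{\alpha}=\langle 1,p_{\alpha\bmod 2}\rangle_{\mathbb{Z}[t]}$, the verification of a family of fundamental $U_5$-relations (not ``a single polynomial relation'' checked by a Sturm bound), and the proof that the coefficients in those relations carry the required powers of $5$, with the gain occurring only on the $U^{(1)}$ steps --- which is why the progression modulus is $5^{2\alpha}$ while the congruence is modulo $5^{\alpha}$. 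Until that construction and those divisibilities are exhibited, the induction you describe does not close, so the proposal is an outline of the cited proof rather than a proof.
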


The methods used in proving this conjecture are based largely on the techniques developed by Paule and Radu in \cite{Paule}, which are themselves generalizations of the original techniques developed by Watson \cite{Watson} and Atkin \cite{Atkin}.

However, while these methods are powerful, and often yield elegant proofs of conjectures involving infinite families of partitions (for example, \cite{Watson}, \cite{Atkin}, \cite{Paule}, and \cite{Smoot}), they give comparatively little understanding of how these conjectures came to be inferred in the first place.  

It was noted by Choi, Kim, and Lovejoy \cite[Section 6]{CKL} that there was a close resemblance between the generating function for $a(n)$ and that of $c\phi_2(n)$, the counting function for generalized 2-colored Frobenius partitions of $n$.  Given that Paule and Radu had recently proven the Andrews--Sellers conjecture \cite{Paule}, which predicted the existence of a family of congruences for $c\phi_2(n)$, Choi, Kim, and Lovejoy suggested that a similar infinite family of congruences must exist for $a(n)$, and by extension $A_1(n)$.  They proved that

\begin{align*}
a(25n+24)\equiv 0\pmod{5},
\end{align*} and suggested additional congruences for $a(n)$ to higher powers of 5.

At first sight, the matter of specifying a family of congruences might seem easy enough.  Certainly, one could directly compute a list of the numerical values of $a(mn+j)$ for a fixed $m,j\in\mathbb{Z}_{\ge 0}$, as $n$ varies over a large number of nonnegative integers.  We could program a computer to check the greatest common divisor of this list.

Yet more interesting, one of the authors has developed algorithms \cite{Radu} that can take series of the form\\ $\sum_{n=0}^{\infty} a(mn+j)q^n$ and expand them into a finite, linear combination of eta quotients.  By examining the coefficients of each term in such a finite combination, and knowing that each eta quotient expands into an integer power series, we can often determine whether $a(mn+j)$ is divisible by a given power of a prime (in our case, 5) for \textit{ all } $n\in\mathbb{Z}_{\ge 0}$.

However, for $24n\equiv 1\pmod{5^{2\alpha}}$, one can quickly show that

\begin{align*}
n = 5^{2\alpha}\delta + \lambda_{2\alpha},
\end{align*} with $\delta\in\mathbb{Z}_{\ge 0}$ and

\begin{align*}
\lambda_{2\alpha} = \frac{23\cdot 5^{2\alpha}+1}{24}.
\end{align*}  This immediately implies that modest increases in $\alpha$ will drive even the smallest values of $n$ to increase exponentially.  Given that \cite[Chapter 6]{Andrews} $a(n)$ already increases subexponentially with $n$, it is very clear that even the most powerful computers will not be able to check the resulting expressions for 
\begin{align*}
\sum_{n=0}^{\infty} a(5^{2\alpha}n+\lambda_{2\alpha})q^n
\end{align*} beyond the very smallest values of $\alpha$.

This of course serves little concern for a conjecture already proven.  However, the methods developed by Atkin, Paule, Radu, and others to actually prove a conjecture of this sort are generally difficult.  One would of course prefer to attempt a proof only for a conjecture that already has substantial evidence in its favor.  This means that we will need to find a more efficient way to verify a family of congruences for many specific values of $\alpha$.

In this report we give one such approach.  We will use Theorem 2 above as our principal example, but we also demonstrate that these techniques may be adapted with relatively little difficulty to many similar conjectures in which an arithmetic sequence $a(n)$ has a generating function that is (up to an exponential factor) an eta quotient.

We begin in Section 2 by discussing the necessary preliminaries.  We give some information about identifying and manipulating the cusps of $\mathrm{X}_0(N)$, the modular curve corresponding to the congruence subgroup $\Gamma_0(N)$.  We then give a quick review of the theory of modular functions, Dedekind's $\eta$ function, and the $U_{\ell}$ operator.

In Section 3 we discuss the generating function (\ref{gena}) and outline the key algorithmic steps to check Theorem 2 for a large number of $\alpha$.  We make use of an important theorem whose proof can be found in \cite{Radu}, which allows us to construct a useful algebra basis for the space of eta quotients over $\Gamma_0(N)$.  From here we show how the basis can be suitably modified to interact more carefully with the $U_5$ operator.

We then discuss how to apply our method in other circumstances.  In Section 4 we briefly outline how our method can be used to efficiently check multiple cases of the Andrews--Sellers conjecture (which was proved by Paule and Radu \cite{Paule}).  We give a generalized form of our method in Section 5.  Finally, in Section 6, we explain why our approach, so useful in verifying a substantial number of cases of a conjecture, is not capable of providing a complete proof.

\section{Preliminaries}

Henceforth, we will denote $\mathbb{H}$ as the upper half complex plane, with $\tau\in\mathbb{H}$, and $q = e^{2\pi i\tau}$.  Furthermore, we will denote

\begin{align*}
(q^a;q^b)_{\infty} := \prod_{m=0}^{\infty}\left(1 - q^{a+bm}\right).
\end{align*}

\subsection{$\Gamma_0(N)$}

Let $N\in\mathbb{Z}_{>0}$.  We will denote 

\begin{align*}
\mathrm{SL}(2,\mathbb{Z}) := \Bigg\{ \begin{pmatrix}
  a & b \\
  c & d 
 \end{pmatrix}:\ a,b,c,d\in\mathbb{Z},\ ad-bc=1 \Bigg\}.
\end{align*}  Furthermore, we let

\begin{align*}
\Gamma_0(N) := \Bigg\{ \begin{pmatrix}
  a & b \\
  c & d 
 \end{pmatrix}\in \mathrm{SL}(2,\mathbb{Z}): N|c \Bigg\},
\end{align*} and

\begin{align*}
\mathrm{SL}(2,\mathbb{Z})_{\infty} := \Bigg\{ \begin{pmatrix}
  1 & b \\
  0 & 1 
 \end{pmatrix} : b\in \mathbb{Z} \Bigg\}.
\end{align*}

\begin{definition}
Let $a/c\in\mathbb{Q}\cup\{\infty\}$.  The cusp over $\Gamma_0(N)$ represented $a/c$ is the coset

\begin{align*}
\Gamma_0(N)\cdot\frac{a}{c} := \left\{ \frac{a_0\cdot\frac{a}{c}+b_0}{c_0\cdot\frac{a}{c}+d_0}: \begin{pmatrix}
  a_0 & b_0 \\
  c_0 & d_0 
 \end{pmatrix}\in\Gamma_0(N) \right\}.
\end{align*}  If $a_1/c_1\in\Gamma_0(N)\cdot\frac{a}{c}$, then $a_1/c_1$ represents the same cusp as $a/c$.
\end{definition}  Given any $N\in\mathbb{Z}_{>0}$, the cusps over $\Gamma_0(N)$ form a set of equivalence classes of $\mathbb{Q}$.  Indeed, \cite[Proposition 3.8.5]{Diamond}, the number of distinct cusps over $\Gamma_0(N)$ matches the number of double cosets of 

\begin{align*}
\Gamma_0(N)\backslash \mathrm{SL}(2,\mathbb{Z})/\mathrm{SL}(2,\mathbb{Z})_{\infty}.
\end{align*}  Because the index of $\Gamma_0(N)$ over $\mathrm{SL}(2,\mathbb{Z})$ is finite \cite[Section 1.2]{Diamond}, the number of cusps over $\Gamma_0(N)$ must necessarily be finite.

The following theorem \cite[Proposition 3.8.3]{Diamond} gives a condition for determining whether two elements of $\mathbb{Q}\cup\{\infty\}$ represent the same cusp.

\begin{theorem}
Let $a/c,\ a_1/c_1\in\mathbb{Q}\cup\{\infty\}$ with $\mathrm{gcd}(a,c) = \mathrm{gcd}(a_1,c_1) = 1$.  Then $a_1/c_1$ represents the same cusp over $\Gamma_0(N)$ as $a/c$ if and only if there exist integers $m,n\in\mathbb{Z}$ such that

\begin{align*}
m a_1 &\equiv a+n c\pmod{N},\\
c_1 &\equiv m c\pmod{N},
\end{align*} with $\mathrm{gcd}(m,N)=1$.
\end{theorem}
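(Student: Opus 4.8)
**The plan is to prove this by identifying when two rationals lie in the same $\Gamma_0(N)$-orbit via an explicit matrix construction.** First I would set up the two directions of the biconditional separately. For the forward direction, suppose $a_1/c_1 \in \Gamma_0(N)\cdot\frac{a}{c}$, so there is $\left(\begin{smallmatrix} a_0 & b_0 \\ c_0 & d_0 \end{smallmatrix}\right) \in \Gamma_0(N)$ with $a_1/c_1 = (a_0 a + b_0 c)/(c_0 a + d_0 c)$ after reducing. Since $\gcd(a_1,c_1)=1$ and the numerator/denominator of the transformed fraction need not be coprime, I would first record that $(a_0 a + b_0 c, c_0 a + d_0 c) = (\lambda a_1, \lambda c_1)$ for some nonzero integer $\lambda$; reducing mod $N$ and using $N \mid c_0$ gives $c_0 a + d_0 c \equiv d_0 c \pmod N$, so $\lambda c_1 \equiv d_0 c \pmod N$, and $\det = 1$ forces $\gcd(d_0, N) = 1$ (since $a_0 d_0 \equiv 1 \pmod{N}$ from $a_0 d_0 - b_0 c_0 = 1$ and $N \mid c_0$). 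Taking $m \equiv \lambda d_0^{-1} \pmod N$ (wait—I want $m$ with $\gcd(m,N)=1$) and tracking the numerator relation $\lambda a_1 \equiv a_0 a + b_0 c \pmod N$ should, after multiplying through by $d_0$ and using $a_0 d_0 \equiv 1$, yield $m a_1 \equiv a + nc \pmod N$ for a suitable $n$ (coming from $b_0 d_0$), with $m$ a unit mod $N$; the second congruence $c_1 \equiv mc \pmod N$ falls out of $\lambda c_1 \equiv d_0 c$ after the same multiplication.

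For the converse, suppose the two congruences hold with $\gcd(m,N)=1$. **The heart of the argument is to build an explicit element of $\Gamma_0(N)$ sending $a/c$ to $a_1/c_1$.** I would first reduce to the case $a_1/c_1 = \infty$, i.e. the cusp $1/0$: using $\gcd(a_1,c_1)=1$ pick $a_2, c_2$ with $a_2 c_1 - c_2 a_1 = 1$ — wait, I want a matrix in $\mathrm{SL}(2,\mathbb{Z})$ sending $a_1/c_1$ to a standardized rational, but I must stay inside $\Gamma_0(N)$, so instead I would work directly. The standard move is: since $\gcd(m,N)=1$ and $\gcd(a_1,c_1)=1$, one can adjust $a_1/c_1$ (replacing $a_1$ by $a_1 + tc_1$ for suitable $t$, which does not change the cusp it represents as an unreduced fraction class) so that $\gcd(a_1, cN) $ behaves well, and then solve the simultaneous congruences to produce integers $b_0, d_0$ making $\left(\begin{smallmatrix} a_0 & b_0 \\ c_0 & d_0 \end{smallmatrix}\right)$ with $c_0 = mc$-type entry. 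Concretely: from $c_1 \equiv mc \pmod N$ write $c_1 = mc + Nk$; from $ma_1 \equiv a + nc \pmod N$ write $ma_1 = a + nc + N\ell$. I would then seek a matrix $\gamma = \left(\begin{smallmatrix} p & q \\ r & s \end{smallmatrix}\right)\in\Gamma_0(N)$ (so $N\mid r$, $ps-qr=1$) with $\gamma(a/c) = a_1/c_1$, which amounts to $pa + qc = \mu a_1$, $ra + sc = \mu c_1$ for some unit-scaling $\mu$; the congruence data plus a determinant/Bézout computation pins down the entries modulo $N$, and then a final application of strong approximation / CRT (choosing the actual integer lifts so that the determinant equals $1$ exactly) completes the construction.

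**The main obstacle I expect is this last integer-lifting step: ensuring the matrix one assembles from congruence data actually has determinant exactly $1$ over $\mathbb{Z}$, not merely $\equiv 1 \pmod N$, while keeping the lower-left entry divisible by $N$.** This is the familiar but delicate point that the reduction map $\mathrm{SL}(2,\mathbb{Z}) \to \mathrm{SL}(2,\mathbb{Z}/N)$ is surjective (and similarly for $\Gamma_0(N)$ onto the appropriate subgroup), so the congruence-level solution lifts; but one must verify the $\gcd$ conditions needed to invoke this — in particular that after the adjustments the candidate top row $(p,q)$ can be taken with $\gcd(p,q)=1$ and $\gcd(r,s)$ compatible. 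Handling the edge cases where $c = 0$ or $c_1 = 0$ (the cusp at $\infty$) separately will also need a brief remark, since there $\gcd(a,c)=1$ forces $a = \pm 1$ and the congruences simplify. Once the matrix is in hand, checking that it maps $a/c$ to $a_1/c_1$ and lies in $\Gamma_0(N)$ is a direct verification, and combined with the forward direction this establishes the biconditional. Throughout, I would lean on the identification (already cited from \cite[Proposition 3.8.5]{Diamond}) of cusps with double cosets $\Gamma_0(N)\backslash\mathrm{SL}(2,\mathbb{Z})/\mathrm{SL}(2,\mathbb{Z})_\infty$ to translate "same cusp" into "same $\Gamma_0(N)$-orbit of the reduced fraction," which is what makes the matrix-construction strategy legitimate.
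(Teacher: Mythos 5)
The paper never proves this statement---it is quoted directly from Diamond and Shurman \cite[Proposition 3.8.3]{Diamond}---so your attempt has to be judged on its own terms rather than against an in-paper argument. Your forward direction is essentially sound, but it needs one explicit ingredient you omit: since $\gcd(a,c)=1$ and $\gamma=\left(\begin{smallmatrix} a_0 & b_0\\ c_0 & d_0\end{smallmatrix}\right)\in\mathrm{SL}(2,\mathbb{Z})$ carries primitive integer vectors to primitive integer vectors, the scalar $\lambda$ in $(a_0a+b_0c,\,c_0a+d_0c)=(\lambda a_1,\lambda c_1)$ is forced to be $\pm 1$. Without that, your $m$ (whether you take $\lambda d_0$ or $\lambda d_0^{-1}$---the proposal wavers between the two) need not be a unit modulo $N$, and the whole point of the condition $\gcd(m,N)=1$ is lost. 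With $\lambda=\pm1$ recorded, multiplying $\lambda a_1\equiv a_0a+b_0c$ by $d_0$ and using $a_0d_0\equiv 1\pmod N$ does give $m a_1\equiv a+nc$ and $c_1\equiv mc$ with $m=\lambda d_0$, $n=b_0d_0$, as you intend.

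The genuine gap is in the converse. Your plan is to solve for the entries of $\gamma$ modulo $N$ and then invoke surjectivity of $\mathrm{SL}(2,\mathbb{Z})\to\mathrm{SL}(2,\mathbb{Z}/N\mathbb{Z})$ (or a CRT/strong-approximation lift) to obtain an honest element of $\Gamma_0(N)$. But the requirement $\gamma(a/c)=a_1/c_1$ is the \emph{exact integral} condition $pa+qc=\pm a_1$, $ra+sc=\pm c_1$, not a condition modulo $N$; lifting a mod-$N$ matrix to $\mathrm{SL}(2,\mathbb{Z})$ with $N\mid r$ gives you no control over where the lift actually sends $a/c$, so this step fails as described. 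The workable route is the reverse order of quantification: first parametrize \emph{all} matrices in $\mathrm{SL}(2,\mathbb{Z})$ sending $(a,c)^{T}$ to $\pm(a_1,c_1)^{T}$, namely $\pm\,\alpha_1\left(\begin{smallmatrix}1 & j\\ 0 & 1\end{smallmatrix}\right)\alpha^{-1}$ with $j\in\mathbb{Z}$, where $\alpha,\alpha_1\in\mathrm{SL}(2,\mathbb{Z})$ have first columns $(a,c)^{T}$ and $(a_1,c_1)^{T}$ (these exist by $\gcd(a,c)=\gcd(a_1,c_1)=1$, and the stabilizer of a primitive vector is the conjugated unipotent group, which is where the $j$-freedom comes from). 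Membership of such a matrix in $\Gamma_0(N)$ is then a single linear congruence in $j$---its lower-left entry is $c c_1 j + (c_1\delta-c\delta_1)$ for the chosen second columns $(\beta,\delta)$, $(\beta_1,\delta_1)$---and the content of the converse is precisely that your two hypothesized congruences together with $\gcd(m,N)=1$ guarantee this congruence is solvable. That reduction, not a lifting theorem, is what closes the argument (and it also handles the $c=0$ or $c_1=0$ cases uniformly, since everything is phrased in terms of primitive vectors rather than fractions).
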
  \noindent A complete treatment of the geometrical interpretation of the cusps over $\Gamma_0(N)$ and the associated modular curve $\mathrm{X}_0(N)$ can be found in \cite[Chapters 2, 3]{Diamond} .

\subsection{Modularity}

\begin{definition}
Let $q=e^{2\pi i\tau}$, with $\tau\in\mathbb{H}$, and suppose that $f:\mathbb{H}\rightarrow\mathbb{C}$ is a holomorphic function for all $\tau\in\mathbb{H}$.  In this case, $f$ is a weakly holomorphic modular form over $\Gamma_0(N)$ with weight $k\in\mathbb{Z}$ if the following conditions apply:

\begin{enumerate}
\item  For any $\begin{pmatrix}
  a & b \\
  c & d 
 \end{pmatrix}\in\Gamma_0(N)$, we have \begin{align*}
\left(c\tau+d\right)^{-k}\cdot f\left( \frac{a\tau+b}{c\tau+d} \right) = f(\tau),
\end{align*}
\item For any $\gamma=\begin{pmatrix}
  a & b \\
  c & d 
 \end{pmatrix}\in \mathrm{SL}(2,\mathbb{Z})$, we have \begin{align*}
\left(c\tau+d\right)^{-k}\cdot f\left( \frac{a\tau+b}{c\tau+d} \right) = \sum_{n=n_{\gamma}(f)}^{\infty}\alpha_{\gamma}(n)q^{n\cdot\mathrm{gcd}(c^2,N)/ N},
\end{align*} with $n_{\gamma}(f)\in\mathbb{Z}$, and $\alpha_{\gamma}(n)\in\mathbb{C}$ for all $n\ge n_{\gamma}(f)$, with $\alpha_{\gamma}(n_{\gamma}(f))\neq 0$.
\end{enumerate}  Here, we define $\mathrm{ord}_{a/c}^{(N)}(f) := n_{\gamma}(f)$ as the order of $f$ at the cusp represented by $a/c$, over $\Gamma_0(N)$.  If $\mathrm{ord}_{a/c}^{(N)}(f)<0$, then $f$ is said to have a pole at $a/c$, with principal part

\begin{align*}
\sum_{n=n_{\gamma}(f)}^{-1} \alpha_{\gamma}(n) q^{n\cdot\mathrm{gcd}(c^2,N)/ N}.
\end{align*}  If $\mathrm{ord}_{a/c}^{(N)}(f) > 0$, then $f$ is said to have a zero at $a/c$.

\end{definition}

If we strengthen our first condition, i.e., if $k=0$, then we call $f$ a modular function.  If we strengthen the second condition, i.e., if we insist that $\mathrm{ord}_{a/c}^{(N)}(f)\ge 0$ for every cusp of $\Gamma_0(N)$, then we call $f$ a modular form of weight~$k$.

An extremely important property of weakly holomorphic modular forms is that these conditions cannot both be strengthened without reducing the relevant functions to a constant.

\begin{theorem}
Let $N\in\mathbb{Z}_{>0}$.  If $f$ is a modular function with nonnegative order at every cusp of $\Gamma_0(N)$, then $f$ must be a constant.
\end{theorem}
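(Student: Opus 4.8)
The plan is to exploit the classical fact that a modular function is, after passing to the compact Riemann surface $\mathrm{X}_0(N)$, a meromorphic function on a compact connected complex manifold. First I would observe that the transformation law in condition (1) with $k=0$ together with the boundedness encoded by the $q$-expansions at every cusp (condition (2) with all orders nonnegative) means that $f$ descends to a well-defined function on $\mathrm{X}_0(N) = \Gamma_0(N)\backslash\mathbb{H}^{*}$, where $\mathbb{H}^{*} = \mathbb{H}\cup\mathbb{Q}\cup\{\infty\}$. On the interior $\Gamma_0(N)\backslash\mathbb{H}$ the function $f$ is holomorphic by hypothesis, and at each cusp the local coordinate is a suitable power of $q$ (namely $q^{\gcd(c^2,N)/N}$ after conjugating the cusp to $\infty$), so $\mathrm{ord}_{a/c}^{(N)}(f)\ge 0$ says precisely that $f$ extends holomorphically across that puncture. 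Hence $f$ is a global holomorphic function on the compact Riemann surface $\mathrm{X}_0(N)$.

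The key step is then to invoke the maximum modulus principle (equivalently, Liouville's theorem for compact Riemann surfaces): a holomorphic function on a compact connected complex manifold attains its maximum modulus, hence is constant. One must check that $\mathrm{X}_0(N)$ is connected — this is standard, since $\mathbb{H}$ is connected and the quotient map is continuous and surjective — and that it is compact, which follows because $\Gamma_0(N)$ has finite index in $\mathrm{SL}(2,\mathbb{Z})$ (cited in the excerpt) and there are finitely many cusps, so $\mathrm{X}_0(N)$ is a finite-area quotient compactified by finitely many points. With connectedness and compactness in hand, the maximum modulus argument immediately forces $f$ to be constant.

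An alternative route, which I would mention as a remark, is more elementary and avoids explicitly citing Riemann surface theory: one can argue via the valence formula (the sum of the orders of a nonzero modular function over all points of $\mathrm{X}_0(N)$, including cusps and elliptic points suitably weighted, equals zero). If every $\mathrm{ord}_{a/c}^{(N)}(f)\ge 0$ at the cusps, and $f$ is holomorphic on $\mathbb{H}$ so has nonnegative order at every interior point, then the total is a sum of nonnegative terms equal to zero, forcing every order to vanish; a modular function with no zeros or poles is constant. Either way, the logical skeleton is: descend to $\mathrm{X}_0(N)$, use compactness, conclude constancy.

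The main obstacle — really the only nontrivial point — is justifying that the local behavior at a cusp is genuinely that of a holomorphic function in a local uniformizer, i.e. that the $q$-expansion in condition (2), read in the variable $q^{\gcd(c^2,N)/N}$, is exactly the Taylor expansion in a bona fide analytic coordinate on $\mathrm{X}_0(N)$ near that cusp. This requires knowing the width of each cusp and that $q^{1/\text{width}}$ (up to the $\gcd$ normalization) is a local coordinate, which is where the structure theory of $\Gamma_0(N)$ and Theorem 2.2 on cusp equivalence enter. Once that identification is accepted as part of the standard setup (and the reader is referred to \cite[Chapters 2, 3]{Diamond}), the remainder is a one-line appeal to compactness.
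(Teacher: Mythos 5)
Your proposal is correct, and it is essentially the standard argument: the paper gives no in-text proof but defers to Knopp, whose proof of this ``fundamental theorem'' is precisely the compactness/maximum-modulus argument you describe (descend to the compact connected curve $\mathrm{X}_0(N)$, note holomorphy at the cusps in the local uniformizer $q^{\gcd(c^2,N)/N}$, and conclude constancy). Your valence-formula remark is also fine, with the small caveat that ``no zeros or poles implies constant'' itself needs the compactness (or an application of the same formula to $f-a$), which you implicitly acknowledge.
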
  See \cite[Chapter 2, Theorem 7]{Knopp} for a proof.

This has been called ``the fundamental theorem of the subject [of modular functions]" \cite[Chapter 1, Section 3]{Lehner}.  Its utility becomes clear upon comparing any two modular functions.  If $f,g$ are both modular functions over $\Gamma_0(N)$, and their principal parts at each of their poles match, then $f-g$ must be a modular function with no poles at any cusp.  This forces $f-g$ to be a constant.  If their constants also match, then $f$ and $g$ must be equal, since $f-g=0$.

The question of equality between modular functions can therefore be reduced to the question of comparing their finite principal parts and constants---which of course immediately reduces to the question of comparing polynomials.

Hereafter, we will denote $\mathcal{M}(N)$ as the set of all modular functions over $\Gamma_0(N)$.  For any field $\mathbb{K}$, we define $\mathcal{M}(N)_{\mathbb{K}}$ as the set of modular functions $f\in\mathcal{M}(N)$ such that $\alpha_{\mathrm{I}}(n)\in\mathbb{K}$ for all $n\ge n_{\mathrm{I}}(f)$. Finally, define $\mathcal{M}^{\infty}(N)$ as the set of all modular functions over $\Gamma_0(N)$ in which $n_{\gamma}\ge 0$ for every $\gamma\in~\mathrm{SL}(2,\mathbb{Z})\backslash\Gamma_0(N)$.  That is, the functions of $\mathcal{M}^{\infty}(N)$ only have a single pole (not counting multiplicity), at the cusp represented by $\infty$.

Finally, given a ring $\mathcal{R}$ and a set of functions $\mathcal{S}$ over $\mathbb{C}$, we let $\left< \mathcal{S} \right>_{\mathcal{R}}$ denote all elements of the form $r_1s_1 + r_2s_2 + ... + r_ks_k$, for any $r_1, r_2,..., r_k\in\mathcal{R}$ and any $s_1,s_2,...,s_k\in\mathcal{S}$.

\subsection{Dedekind's $\eta$ Function}

Of particular importance to us is Dedekind's eta function, which we define here:

\begin{definition}
\begin{align*}
\eta(\tau) := e^{\pi i\tau/12}\prod_{n=1}^{\infty} (1 - e^{2\pi i n\tau}) = q^{1/24} (q;q)_{\infty}.
\end{align*}
\end{definition}  The function $\eta$ is not strictly modular by our definitions.  However, it does satisfy a slightly weaker symmetric condition (that is, $\eta$ is a modular form of fractional weight over $\mathrm{SL}(2,\mathbb{Z})$ with a nontrivial multiplier system \cite[Chapter 3, Theorem 10]{Knopp}).  This, combined with the many combinatorial interpretations of $\eta$, make it useful in constructing and representing many important modular functions.

For this purpose, we introduce a theorem due to Newman \cite[Theorem 1]{Newman2}:

\begin{theorem}
Let $f = \prod_{\delta | N} \eta(\delta\tau)^{r_{\delta}}$, with $r = (r_{\delta})_{\delta | N}$ an integer-valued vector, for some $N\in\mathbb{Z}_{>0}$.  Then $f$ is a modular function over $\Gamma_0(N)$ if and only if the following apply:

\begin{align}
&\sum_{\delta | N} r_{\delta} = 0,\label{newman1a}\\
&\sum_{\delta | N} \delta r_{\delta} \equiv 0\pmod{24},\label{newman1b}\\
&\sum_{\delta | N} \frac{N}{\delta}r_{\delta} \equiv 0\pmod{24},\label{newman1c}\\
&\prod_{\delta | N} \delta^{|r_{\delta}|} = k_0^2,\label{newman1d}
\end{align} for some $k_0\in\mathbb{Z}$.

\end{theorem}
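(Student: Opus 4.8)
The plan is to verify Newman's four conditions by reducing the question of modularity of an eta quotient $f = \prod_{\delta\mid N}\eta(\delta\tau)^{r_\delta}$ to the two defining conditions of a modular function (weight zero and the transformation/expansion conditions) over $\Gamma_0(N)$. The starting point is the transformation law for $\eta$ itself under $\mathrm{SL}(2,\mathbb{Z})$: for $\gamma = \left(\begin{smallmatrix} a & b \\ c & d\end{smallmatrix}\right)\in\mathrm{SL}(2,\mathbb{Z})$ with $c>0$, one has $\eta\!\left(\frac{a\tau+b}{c\tau+d}\right) = \varepsilon(\gamma)\,(c\tau+d)^{1/2}\,\eta(\tau)$, where $\varepsilon(\gamma)$ is a $24$-th root of unity given explicitly in terms of a Dedekind sum (see \cite[Chapter 3, Theorem 10]{Knopp}). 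Since $\eta$ never vanishes on $\mathbb{H}$ and $f$ is a finite product of such factors, $f$ is holomorphic and nonvanishing on $\mathbb{H}$; so the only thing at stake is the behaviour under $\Gamma_0(N)$ and the $q$-expansions at the cusps.

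First I would compute the weight: each factor $\eta(\delta\tau)$ contributes weight $1/2$, so $f$ has weight $\frac{1}{2}\sum_{\delta\mid N} r_\delta$, and weight zero forces (\ref{newman1a}). Next, for a general $\gamma\in\Gamma_0(N)$ I would substitute $\delta\tau$ into the $\eta$-transformation law, carefully tracking how the entries change: $\eta(\delta\cdot\gamma\tau)$ becomes $\eta$ evaluated at an $\mathrm{SL}(2,\mathbb{Z})$-transform of $\delta\tau$ whose lower-left entry involves $N/\gcd(\delta,\cdot)$-type factors, and the automorphy factors $(c\tau+d)^{1/2}$ multiply together. Because $\sum r_\delta = 0$, the half-integral automorphy factors cancel identically, leaving $f(\gamma\tau) = \left(\prod_\delta \varepsilon_\delta(\gamma)^{r_\delta}\right) f(\tau)$; the condition that this multiplier-product equal $1$ for \emph{all} $\gamma\in\Gamma_0(N)$ is then unwound, via the explicit Dedekind-sum formulas for the $\varepsilon_\delta$, into the two congruences (\ref{newman1b}) and (\ref{newman1c}) modulo $24$. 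The square condition (\ref{newman1d}) arises separately: it is exactly the requirement that the character $\left(\tfrac{(-1)^k s}{d}\right)$ attached to $\prod\delta^{|r_\delta|}$ (a Kronecker-symbol contribution to the multiplier from the generators of $\Gamma_0(N)$) be trivial, which happens iff $\prod_\delta \delta^{|r_\delta|}$ is a perfect square.

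Conversely, assuming all four conditions, I would run the same computation in reverse: (\ref{newman1a}) kills the weight and the half-integral factors, (\ref{newman1b})–(\ref{newman1d}) force every $\varepsilon_\delta$-product to be $1$, so condition (1) of the definition of a modular function holds on $\Gamma_0(N)$. For condition (2), I would expand $f$ at an arbitrary cusp $a/c$ by conjugating with a matrix of $\mathrm{SL}(2,\mathbb{Z})$ sending $\infty$ to $a/c$; the $\eta$-transformation law again gives a nonzero leading coefficient times a fractional power of $q$, and a short computation (essentially the standard formula $\mathrm{ord}_{a/c}^{(N)}(f) = \frac{N}{24\gcd(c^2,N)}\sum_{\delta\mid N}\frac{\gcd(c,\delta)^2}{\delta}r_\delta$) shows the exponents lie in the lattice $\frac{1}{\gcd(c^2,N)/N}\mathbb{Z}$ demanded by condition (2), with $n_\gamma(f)$ finite. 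Hence $f$ is a modular function over $\Gamma_0(N)$.

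The main obstacle is bookkeeping rather than ideas: one must track the Dedekind-sum multipliers $\varepsilon_\delta(\gamma)$ simultaneously for every divisor $\delta\mid N$ as $\gamma$ ranges over generators of $\Gamma_0(N)$, and show that the resulting constraints on the vector $r$ collapse to precisely the stated congruences and the square condition — neither weaker nor stronger. I expect the cleanest route is to cite the explicit multiplier formula from \cite[Chapter 3, Theorem 10]{Knopp}, apply it to a convenient generating set of $\Gamma_0(N)$ (translations together with matrices of the form $\left(\begin{smallmatrix} a & b \\ N & d\end{smallmatrix}\right)$), and reduce to a finite check; the translation generator immediately yields (\ref{newman1b}), the Fricke-type behaviour yields (\ref{newman1c}), and the quadratic-residue symbol yields (\ref{newman1d}). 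Since this is Newman's theorem, I would in practice simply refer to \cite[Theorem 1]{Newman2} for the full details, having indicated here why the four conditions are the natural ones.
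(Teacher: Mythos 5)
The paper offers no proof of this statement: it is quoted as Newman's theorem and justified solely by the citation to \cite[Theorem 1]{Newman2}, which is also exactly where your argument defers the decisive bookkeeping (that the $\eta$-multiplier constraints collapse to precisely the four stated conditions, and the necessity direction). Your sketch of the standard route---weight count, the $\eta$ multiplier system, triviality of the resulting character on $\Gamma_0(N)$, and cusp expansions via the order formula---is sound and consistent with that reference, so in substance you are taking the same approach as the paper.
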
  An example of such a modular function is

\begin{align*}
\left(\frac{\eta(5\tau)}{\eta(\tau)}\right)^6 = \eta(\tau)^{-6}\eta(5\tau)^6\in\mathcal{M}(5),
\end{align*} since $r=(-6,6)$ satisfies the four conditions above.

\begin{definition}
An eta quotient over $\Gamma_0(N)$ is a function of the form

\begin{align*}
f = \prod_{\delta | N} \eta(\delta\tau)^{r_{\delta}},
\end{align*} with $r = (r_{\delta})_{\delta | N}$ an integer-valued vector.

Define $\mathcal{E}(N)$ to be the set of eta quotients which are modular functions over $\Gamma_0(N)$, and $\mathcal{E}^{\infty}(N) := \mathcal{M}^{\infty}(N)\cap\mathcal{E}(N)$.
\end{definition}

Given an eta quotient $f$, its expansion at $\infty$ has integer coefficients, as does its inverse $1/f$.  Moreover, we have a precise formula for the order of $f$ at any given cusp, as given in \cite[Theorem 23]{Radu}, generally attributed to Ligozat:

\begin{theorem}
If $f = \prod_{\delta | N} \eta(\delta\tau)^{r_{\delta}}\in\mathcal{E}(N)$, then the order of $f$ at the cusp represented by $a/c$ is given by the following:

\begin{align}
\mathrm{ord}_{a/c}^{(N)}(f) = \frac{N}{24\gcd{(c^2,N)}}\sum_{\delta | N} r_{\delta}\frac{\gcd{(c,\delta)}^2}{\delta}.\label{ligozat11}
\end{align}

\end{theorem}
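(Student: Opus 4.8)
The plan is to reduce the claim to a single eta factor and then track how $\eta(\delta\tau)$ transforms under a matrix representing the cusp. Fix $a/c$ with $\gcd(a,c)=1$ and complete it to $\gamma=\begin{pmatrix} a & b \\ c & d\end{pmatrix}\in\mathrm{SL}(2,\mathbb{Z})$; we may assume $c>0$, since the case $c=0$ (the cusp $\infty$) is the familiar statement that the $q$-order of $\prod_{\delta\mid N}\eta(\delta\tau)^{r_\delta}$ is $\tfrac{1}{24}\sum_{\delta\mid N}\delta r_\delta$, which is \eqref{ligozat11} with $c=0$. Since the order of a product is the sum of the orders, and since the weight-$\tfrac12$ automorphy factors of the individual $\eta(\delta\gamma\tau)$ multiply to $(c\tau+d)^{k}$ with $k=\tfrac12\sum_{\delta\mid N}r_\delta=0$ by \eqref{newman1a}, it suffices to compute, for each $\delta\mid N$, the leading exponent of $\eta(\delta\gamma\tau)$ as a (fractional) power of $q=e^{2\pi i\tau}$.

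First I would perform the standard factorization
\begin{align*}
\begin{pmatrix} \delta & 0 \\ 0 & 1 \end{pmatrix}\begin{pmatrix} a & b \\ c & d \end{pmatrix} = M_{\delta} L_{\delta}, \qquad M_{\delta}\in\mathrm{SL}(2,\mathbb{Z}),\quad L_{\delta}=\begin{pmatrix} g_{\delta} & \beta_{\delta} \\ 0 & \delta/g_{\delta} \end{pmatrix},
\end{align*}
where $g_{\delta}:=\gcd(c,\delta)=\gcd(\delta a,c)$ (the last equality using $\gcd(a,c)=1$) and $\beta_{\delta}\in\mathbb{Z}$: comparing determinants forces $\det L_{\delta}=\delta$, which fixes the lower-right entry, and the existence of an integral $\beta_{\delta}$ together with the remaining entries of $M_{\delta}$ follows from $\gcd(\delta a/g_{\delta},\,c/g_{\delta})=1$. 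Then $\eta(\delta\gamma\tau)=\eta(M_{\delta}L_{\delta}\tau)$, and applying the transformation law of $\eta$ under $\mathrm{SL}(2,\mathbb{Z})$ (\cite[Chapter 3, Theorem 10]{Knopp}) writes $\eta(M_{\delta}L_{\delta}\tau)$ as $\eta(L_{\delta}\tau)$ times a $24$th root of unity and a square root of an automorphy factor. Since $L_{\delta}\tau=\tfrac{g_{\delta}^{2}}{\delta}\tau+\tfrac{g_{\delta}\beta_{\delta}}{\delta}$, the product definition of $\eta$ gives
\begin{align*}
\eta(L_{\delta}\tau)=e^{\pi i g_{\delta}\beta_{\delta}/(12\delta)}\;e^{2\pi i\,(g_{\delta}^{2}/\delta)\,\tau/24}\prod_{n\ge 1}\bigl(1-\zeta_{\delta}^{\,n}e^{2\pi i n (g_{\delta}^{2}/\delta)\tau}\bigr),\qquad \zeta_{\delta}:=e^{2\pi i g_{\delta}\beta_{\delta}/\delta},
\end{align*}
so that, as $\tau\to i\infty$, the leading term of $\eta(\delta\gamma\tau)$ is a nonzero constant times $e^{2\pi i\tau}$ raised to the power $\tfrac{g_{\delta}^{2}}{24\delta}=\tfrac{\gcd(c,\delta)^{2}}{24\delta}$; the leading coefficient is genuinely nonzero because $\eta$ does not vanish on $\mathbb{H}$.

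Summing these contributions with weights $r_{\delta}$, the leading exponent of $f(\gamma\tau)$ in powers of $e^{2\pi i\tau}$ is $\sum_{\delta\mid N}r_{\delta}\tfrac{\gcd(c,\delta)^{2}}{24\delta}$. Finally, since the order in Definition 2.5 is measured in powers of $q^{\gcd(c^{2},N)/N}$, I divide this exponent by $\gcd(c^{2},N)/N$, which yields exactly \eqref{ligozat11}. The main obstacle is the factorization step: one must check that $L_{\delta}$ is genuinely integral with top-left entry precisely $\gcd(c,\delta)$, and then verify that the combined expansion of $\prod_{\delta\mid N}\eta(\delta\gamma\tau)^{r_{\delta}}$ is in fact a Laurent series in $q^{\gcd(c^{2},N)/N}$ with the asserted nonzero leading term --- equivalently, that each $g_{\delta}^{2}/\delta$ is an integral multiple of $\gcd(c^{2},N)/N$. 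Once this is in place, the remaining bookkeeping of the roots of unity $\zeta_{\delta}$, the $24$th roots of unity from the $\eta$-multiplier, and the automorphy factors is routine: none of them affects the order, and well-definedness of $n_{\gamma}(f)$ across choices of representative $\gamma$ is guaranteed by $f$ being a modular function.
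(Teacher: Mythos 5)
The paper does not prove this theorem at all --- it is quoted from \cite[Theorem 23]{Radu} (Ligozat's formula), so there is no in-paper argument to compare against. Your proof is the standard one and is essentially correct: the factorization $\left(\begin{smallmatrix}\delta&0\\0&1\end{smallmatrix}\right)\gamma=M_\delta L_\delta$ with $M_\delta\in\mathrm{SL}(2,\mathbb{Z})$ and $L_\delta$ upper triangular of determinant $\delta$ and top-left entry $\gcd(c,\delta)$ does go through (the lower-left entry of $M_\delta^{-1}\left(\begin{smallmatrix}\delta a&\delta b\\c&d\end{smallmatrix}\right)$ vanishes by construction, and the determinant forces the diagonal entries), the automorphy factors of weight $\tfrac12$ cancel by \eqref{newman1a}, and the leading exponent $\gcd(c,\delta)^2/(24\delta)$ divided by the normalization $\gcd(c^2,N)/N$ of Definition 2.5 gives exactly \eqref{ligozat11}. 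The one point you flag but do not settle --- that each $N\gcd(c,\delta)^2/(\delta\gcd(c^2,N))$ is an integer, so the combined expansion really is a Laurent series in $q^{\gcd(c^2,N)/N}$ with the computed term as its genuine leading term --- is true and is the only nontrivial divisibility check left; it should be carried out rather than deferred, but it does not affect the value of the order you derive.
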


\subsection{$U_{\ell}$ Operator}

We recall the classic $U_{\ell}$-operator:

\begin{definition}

Let $\ell\in\mathbb{Z}_{>0}$ be a prime, and $f(q) = \sum_{m\ge M}a(m)q^m$.  Then define

\begin{align*}
U_{\ell}\left(f(q)\right) := \sum_{\ell\cdot m\ge M} a(\ell\cdot m)q^m.
\end{align*}

\end{definition}

This operator is often enormously useful, because it gives us a means of connecting different cases of a given congruence conjecture.

For most of our examples, we will only need the specific case $\ell = 5$, but for the remainder of the subsection we will list some key properties of $U_{\ell}$ in which $\ell$ is an arbitrary but fixed prime.  This will allow us to generalize our results in Section 5.

The properties in the following lemma are standard to the theory of partition congruences, and proofs can be found in \cite[Chapter 10]{Andrews} and \cite[Chapter 8]{Knopp}.

\begin{lemma}

Given two functions 

\begin{align*}
f(q) = \sum_{m\ge M}a(m)q^m,\ g(q) = \sum_{m\ge N}b(m)q^m,
\end{align*} any $\alpha\in\mathbb{C}$, a primitive $\ell$-th root of unity $\zeta$, and the convention that $q^{1/\ell}~=~e^{2\pi i\tau/\ell}$, we have the following:
\begin{enumerate}
\item $U_{\ell}\left(\alpha\cdot f+g\right) = \alpha\cdot U_{\ell}\left(f\right) + U_{\ell}\left(g\right)$;
\item $U_{\ell}\left(f(q^{\ell})g(q)\right) = f(q) U_{\ell}\left(g(q)\right)$;
\item $\ell\cdot U_{\ell}\left(f\right) = \sum_{r=0}^{\ell - 1} f\left( \zeta^rq^{1/\ell} \right)$.
\end{enumerate}
\end{lemma}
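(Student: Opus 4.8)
My plan is to verify each of the three identities directly from the definition of $U_\ell$ as the operator that keeps the coefficients indexed by multiples of $\ell$ and reindexes $q^{\ell m}\mapsto q^m$; throughout I will treat $f$ and $g$ as formal Laurent series in $q$, so that every step below is a termwise rearrangement (an analytic reading on a punctured disk about $q=0$ works equally well but is not needed). First, for part (1), I would replace the lower bounds $M$ and $N$ by their minimum, padding the shorter series with zero coefficients, so that $\alpha f+g$ has $n$-th coefficient $\alpha a(n)+b(n)$; retaining only the terms with $\ell\mid n$ and relabeling $n=\ell m$ gives $\alpha\,U_\ell(f)+U_\ell(g)$ at once.

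For part (2), I would expand
\begin{align*}
f(q^\ell)\,g(q) = \sum_{j\ge M}\sum_{k\ge N} a(j)\,b(k)\,q^{\ell j + k},
\end{align*}
and observe that for each fixed exponent $n$ the pairs $(j,k)$ with $j\ge M$, $k\ge N$, and $\ell j+k=n$ are finite in number (indeed $M\le j\le (n-N)/\ell$), so the product is a well-defined Laurent series whose coefficient of $q^n$ is $\sum_j a(j)\,b(n-\ell j)$. Applying $U_\ell$ selects $n=\ell m$ and leaves $\sum_j a(j)\,b(\ell(m-j))$; on the other side, $f(q)\,U_\ell(g(q)) = \big(\sum_j a(j)q^j\big)\big(\sum_i b(\ell i)q^i\big)$ has coefficient of $q^m$ equal to $\sum_j a(j)\,b(\ell(m-j))$ as well, so the two series agree coefficient by coefficient.

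For part (3), the plan is the classical roots-of-unity filter. With the convention $q^{1/\ell}=e^{2\pi i\tau/\ell}$ one has $f(\zeta^r q^{1/\ell}) = \sum_{m\ge M} a(m)\,\zeta^{rm}\,q^{m/\ell}$, and summing over $r=0,1,\dots,\ell-1$ (the sums being locally finite in each power of $q^{1/\ell}$, so the interchange is legitimate) gives
\begin{align*}
\sum_{r=0}^{\ell-1} f\!\left(\zeta^r q^{1/\ell}\right) = \sum_{m\ge M} a(m)\,q^{m/\ell}\sum_{r=0}^{\ell-1}\zeta^{rm}.
\end{align*}
Because $\zeta$ is a primitive $\ell$-th root of unity, the inner sum equals $\ell$ when $\ell\mid m$ and $0$ otherwise: if $\zeta^m\neq 1$ then $\sum_{r}(\zeta^m)^r = \big((\zeta^m)^\ell-1\big)/(\zeta^m-1)=0$ since $(\zeta^m)^\ell=(\zeta^\ell)^m=1$. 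Hence only the terms with $m=\ell k$ survive, and the right-hand side collapses to $\ell\sum_k a(\ell k)q^k = \ell\,U_\ell(f)$.

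The only point that requires genuine (if mild) care, rather than bookkeeping, is the well-definedness of the product in part (2): one must check that each coefficient of $f(q^\ell)g(q)$ is a \emph{finite} sum, which is precisely what the one-sided bounds $j\ge M$ and $k\ge N$ guarantee. Once that is secured, all three statements are routine rearrangements requiring nothing beyond the definitions — in particular, no modularity of $f$ or $g$ is invoked.
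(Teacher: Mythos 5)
Your proof is correct. Note that the paper does not actually prove this lemma at all; it simply cites standard references (Andrews, Chapter 10, and Knopp, Chapter 8), so there is no internal argument to compare against. Your direct verification --- linearity from the definition, the coefficient-by-coefficient computation for $U_{\ell}\left(f(q^{\ell})g(q)\right) = f(q)\,U_{\ell}\left(g(q)\right)$ (including the observation that each coefficient is a finite sum because of the one-sided bounds on the exponents), and the roots-of-unity filter for $\ell\cdot U_{\ell}(f) = \sum_{r=0}^{\ell-1} f\left(\zeta^{r}q^{1/\ell}\right)$ --- is exactly the standard argument those references supply, and it is complete as written.
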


Finally, we give an important theorem on the stability of $U_{\ell}$ \cite[Lemma 17 (iv)]{Atkin2}.

\begin{theorem}
Let $k, N\in\mathbb{Z}_{\ge 0}$, with $\ell^2 |N$.  Then $U_{\ell}\left(f\right)\in\mathcal{M}_k(N/{\ell})$ for all $f\in\mathcal{M}_k(N)$.
\end{theorem}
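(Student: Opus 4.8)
The plan is to prove this via the third property of the $U_\ell$ operator from the lemma, namely $\ell\cdot U_\ell(f) = \sum_{r=0}^{\ell-1} f\left(\zeta^r q^{1/\ell}\right)$, which expresses $U_\ell(f)$ as an average over the action of translations. First I would make precise the modular interpretation of this sum. Writing $q^{1/\ell} = e^{2\pi i\tau/\ell}$, the substitution $q \mapsto \zeta^r q^{1/\ell}$ corresponds to replacing $\tau$ by $(\tau + 12r\cdot s)/\ell$ or, more cleanly, by acting with the matrix $\begin{pmatrix} 1 & r \\ 0 & \ell \end{pmatrix}$ (suitably normalized) on $\tau$. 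Thus $\ell\cdot U_\ell(f)$ is a sum of slashes of $f$ by the coset representatives of $\Gamma_0(N)\cap\begin{pmatrix}1 & 0\\ 0 & \ell\end{pmatrix}^{-1}\Gamma_0(N)\begin{pmatrix}1 & 0\\0 & \ell\end{pmatrix}$ inside an appropriate group, together with one representative coming from $\begin{pmatrix}\ell & 0\\0 & 1\end{pmatrix}$. The standard statement (Atkin--Lehner) is that if $f$ is modular of weight $k$ on $\Gamma_0(N)$ with $\ell\mid N$, then $U_\ell(f)$ is modular of weight $k$ on $\Gamma_0(N)$; the sharper claim with $\ell^2\mid N$ giving level $N/\ell$ requires tracking how the cusp conditions and the transformation law interact at the prime $\ell$.

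The key steps, in order, would be: (i) verify holomorphy of $U_\ell(f)$ on $\mathbb{H}$ — immediate, since each $f\left(\zeta^r q^{1/\ell}\right)$ is holomorphic on $\mathbb{H}$ and we divide by $\ell$; (ii) verify the weight-$k$ transformation law under $\Gamma_0(N/\ell)$ — for $\gamma = \begin{pmatrix}a & b\\ c & d\end{pmatrix}$ with $(N/\ell)\mid c$, one computes $(c\tau+d)^{-k} U_\ell(f)\left(\frac{a\tau+b}{c\tau+d}\right)$ by substituting into the sum-of-translates formula, and then reindexing the sum over $r$; the point is that $\begin{pmatrix}1 & r\\0 & \ell\end{pmatrix}\gamma$ can be rewritten as $\gamma' \begin{pmatrix}1 & r'\\0 & \ell\end{pmatrix}$ for some $\gamma'\in\Gamma_0(N)$ precisely because $\ell^2\mid N$ forces $\ell\mid c$ in $\gamma'$ when we only assumed $(N/\ell)\mid c$ — this is where the hypothesis $\ell^2\mid N$ is used; using that $f$ is modular on $\Gamma_0(N)$ then collapses the expression back to $U_\ell(f)(\tau)$; (iii) check the expansion condition at every cusp: for an arbitrary $\gamma\in\mathrm{SL}(2,\mathbb{Z})$, $(c\tau+d)^{-k} U_\ell(f)\left(\gamma\tau\right)$ must have a $q$-expansion in powers of $q^{\gcd(c^2, N/\ell)/(N/\ell)}$ bounded below; since $f$ satisfies the corresponding condition at level $N$ for each of the finitely many matrices $\begin{pmatrix}1 & r\\0 & \ell\end{pmatrix}\gamma$ (or $\begin{pmatrix}\ell & 0\\0 & 1\end{pmatrix}\gamma$), and since $U_\ell$ only deletes terms and rescales the variable, the resulting expansions are meromorphic at the cusp with the required denominator — one must check the denominator $\gcd(c^2, N/\ell)/(N/\ell)$ is compatible, again using $\ell^2\mid N$.

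I would organize step (ii) around the coset computation: enumerate representatives $\delta_r = \begin{pmatrix}1 & r\\0 & \ell\end{pmatrix}$ for $0\le r\le \ell-1$ plus $\delta_\ell = \begin{pmatrix}\ell & 0\\0 & 1\end{pmatrix}$ of $\Gamma_0(N)\backslash \Gamma_0(N)\begin{pmatrix}1 & 0\\0 & \ell\end{pmatrix}\Gamma_0(N)$ in $\mathrm{GL}_2^+(\mathbb{Q})$, show $U_\ell(f) = \frac{1}{\ell} f\big|_k \sum_r \delta_r$ up to scalar normalization, and then for $\gamma\in\Gamma_0(N/\ell)$ prove the set $\{\delta_r\gamma\}$ equals $\{\gamma'_r \delta_{\sigma(r)}\}$ with $\gamma'_r\in\Gamma_0(N)$ and $\sigma$ a permutation. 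The permutation $\sigma$ fixes $\delta_\ell$ (since $\begin{pmatrix}\ell & 0\\0 & 1\end{pmatrix}\gamma\begin{pmatrix}\ell & 0\\0 & 1\end{pmatrix}^{-1} = \begin{pmatrix}a & \ell b\\ c/\ell & d\end{pmatrix}$, integral and in $\Gamma_0(N)$ because $\ell^2 \mid N$ gives $\ell\mid (c/\ell)\cdot(N/\ell^2)\cdot\ell = \ldots$), and acts on the index $r\bmod \ell$ by a linear-fractional rule. The main obstacle, and the place where the argument must be handled carefully rather than by hand-waving, is exactly this coset bookkeeping at the prime $\ell$: showing that the $\ell^2\mid N$ hypothesis is precisely what guarantees the auxiliary matrices $\gamma'_r$ land back in $\Gamma_0(N)$ (not merely $\Gamma_0(N/\ell)$ or worse), so that the transformation law of $f$ can be applied. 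Everything else — holomorphy, the structure of the $q$-expansions at the cusps, the boundedness below of the orders — follows routinely once the coset identity is in hand. As the excerpt cites \cite[Lemma 17 (iv)]{Atkin2} for this result, I would present the coset computation as the substance of the proof and treat the $q$-expansion verification as a direct consequence of property (2) and (3) of the $U_\ell$ lemma together with the level-$N$ modularity of $f$.
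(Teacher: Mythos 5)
Your proposal follows essentially the same route as the paper: write $\ell\cdot U_\ell(f)$ as the sum of the translates $f\left(\frac{\tau+r}{\ell}\right)$, and for $\gamma\in\Gamma_0(N/\ell)$ factor $\begin{pmatrix}1 & r\\ 0 & \ell\end{pmatrix}\gamma = \gamma'_r\begin{pmatrix}1 & r'\\ 0 & \ell\end{pmatrix}$ with $\gamma'_r\in\Gamma_0(N)$, apply the level-$N$ modularity of $f$, and reindex via the permutation $r\mapsto r'$ of residues modulo $\ell$ (this is exactly where the paper uses $\ell^2\mid N$, through $\ell\mid c$ and $\gcd(\ell,d)=1$). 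One small correction: the extra representative $\begin{pmatrix}\ell & 0\\ 0 & 1\end{pmatrix}$ you include belongs to the full Hecke operator $T_\ell$ when $\ell\nmid N$; for $U_\ell$ with $\ell\mid N$ the double coset $\Gamma_0(N)\begin{pmatrix}1 & 0\\ 0 & \ell\end{pmatrix}\Gamma_0(N)$ decomposes into just the $\ell$ cosets represented by $\begin{pmatrix}1 & r\\ 0 & \ell\end{pmatrix}$, and indeed your conjugation claim for that extra matrix (requiring $N\mid c/\ell$) would not hold in general—dropping it costs nothing, since your own formula for $U_\ell(f)$ involves only the $\ell$ translates. Your step (iii) on cusp expansions is a sound addition that the paper's proof in fact leaves implicit.
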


\begin{proof}
From Part 3 of the lemma above, we know that

\begin{align*}
U_{\ell}\left(f\right) = \sum_{r=0}^{\ell - 1} \frac{1}{\ell}f\left( \zeta^rq^{1/\ell} \right).
\end{align*}  Here, changing variables to $\tau$, we find that

\begin{align*}
\zeta^rq^{1/\ell} = \exp\left(\frac{2\pi i(r + \tau)}{\ell}\right).
\end{align*}  Because $f$ is holomorphic for $\tau\in\mathbb{H}$, $\frac{2\pi i(r + \tau)}{\ell}\in\mathbb{H}$, therefore $U_{\ell}(f)$ must be holomorphic as well.

Letting $\tilde{f}(\tau) = f(e^{2\pi i\tau})$, and letting $\gamma=\begin{pmatrix}
  a & b \\
  c & d 
 \end{pmatrix}\in\Gamma_0(N/\ell)$, we have

\begin{align*}
U_{\ell}\left(f\right)\left( \gamma\tau \right) &= \sum_{r=0}^{\ell - 1} \frac{1}{\ell}\tilde{f}\left( \frac{\gamma\tau + r}{\ell} \right)\\
&= \sum_{r=0}^{\ell - 1} \frac{1}{\ell}\tilde{f}\left( \frac{1}{\ell}\left(\frac{a\tau + b}{c\tau + d}\right) + \frac{r}{\ell} \right)\\
&= \sum_{r=0}^{\ell - 1} \frac{1}{\ell}\tilde{f}\left( \frac{(a+rc)\tau + (b+rd)}{\ell\cdot c\tau + \ell\cdot d}\right)\\
&= \sum_{r=0}^{\ell - 1} \frac{1}{\ell}\tilde{f}\left( \gamma' \tau \right),
\end{align*} with

\begin{align*}
\gamma' = \begin{pmatrix}
  a+rc & b+rd \\
  \ell\cdot c & \ell\cdot d 
 \end{pmatrix}.
\end{align*}  Because $\gcd(a,c) = 1$, and because $\ell |c$, we have $\gcd(a+rc,\ell\cdot c)=1$.  Therefore, there exist integers $x,y$, such that $(a+rc)x+\ell\cdot cy=1$.  From this, we immediately have

\begin{align*}
\gamma' = \begin{pmatrix}
  a+rc & b+rd \\
  \ell\cdot c & \ell\cdot d 
 \end{pmatrix} = \begin{pmatrix}
  a+rc & -y \\
  \ell\cdot c & x 
 \end{pmatrix}\begin{pmatrix}
  1 & x(b+rd)+\ell\cdot yd \\
  0 & \ell 
 \end{pmatrix}.
\end{align*}  Because $\begin{pmatrix}
  a+rc & -y \\
  \ell\cdot c & x 
 \end{pmatrix}\in\Gamma_0(N)$, we have

\begin{align*}
\tilde{f}\left( \gamma' \tau \right) &= \left(\ell\cdot c\cdot\frac{\tau + x(b+rd)+\ell\cdot yd}{\ell}+x\right)^k \tilde{f}\left( \begin{pmatrix}
  1 & x(b+rd)+\ell\cdot yd \\
  0 & \ell
 \end{pmatrix}\tau \right)\\
&=\left(c\tau+d\right)^k \tilde{f}\left( \frac{\tau + x(b+rd)+\ell\cdot yd}{\ell} \right)\\
&=\left(c\tau+d\right)^k \tilde{f}\left( \frac{\tau + rxd + (bx +\ell\cdot yd)}{\ell} \right)
\end{align*}  Now because $ad-bc=1$, we have $\gcd(c,d)=1$, implying also that $\gcd(\ell,d)=1$.  So as $r$ runs through the residues modulo $\ell$, $rxd$ similarly runs through all the residues.  Similarly, $rxd + (bx +\ell\cdot yd)$ must run through all the residues too.  In other words,

\begin{align*}
U_{\ell}\left(f\right)\left( \gamma\tau \right) &= \sum_{r=0}^{\ell - 1} \frac{1}{\ell}\tilde{f}\left( \frac{\gamma\tau + r}{\ell} \right)\\
&= \sum_{r=0}^{\ell - 1} \frac{1}{\ell}(c\tau+d)^k\tilde{f}\left( \frac{\tau + rxd + (bx +\ell\cdot yd)}{\ell} \right)\\
&=(c\tau+d)^k\cdot\frac{1}{\ell} \sum_{r=0}^{\ell} \tilde{f}\left( \frac{\tau + r}{\ell} \right)\\
&=(c\tau+d)^k\cdot U_{\ell}\left(f\right)(\tau).
\end{align*}

\end{proof}

\section{Rogers--Ramanujan Subpartitions}

With the necessary preliminaries established, we will begin with the example from which we first developed our method.

Define the sequence $a(n)$ by the following:

\begin{align*}
\mathrm{C}(q) := \sum_{n=0}^{\infty}a(n)q^n = \frac{(q^2;q^2)^5_{\infty}}{(q;q)^3_{\infty}(q^4;q^4)^2_{\infty}}.
\end{align*}  A possible infinite family of congruences modulo 5 was suggested by Choi, Kim, and Lovejoy, although they did not specify the exact family of congruences.  However, it is easy for us to investigate suspicious cases.

\subsection{A Suspicious Case}

The most obvious case to check is the condition that $24n\equiv 1\pmod{5^k}$, which is the congruence condition defining Ramanujan's classic cases.  It can be readily checked that no congruence is obtained in the case that $k=1$.

On the other hand, Choi, Kim, and Lovejoy proved that $a(25n+24)\equiv 0\pmod{5}$, which corresponds to $k=2$.  One may, with some mild computational difficulty, verify that $a(125n+99)$ is always divisible by 5, but not by 25.  One might be inclined to suggest that the interesting condition to examine is 

\begin{align*}
24n\equiv 1\pmod{5^{2k}}.
\end{align*}  However, we have only checked a single case.  Examining even the very next case, the progression $625n+599$, will be much more difficult, especially as $a(n)$ already grows subexponentially.

But supposing that this is indeed the case, let us determine how we might check it and successive cases.  Our opening steps are not unlike the initial steps to a true proof.  We can very quickly define a sequence of functions $\mathcal{L} = (L_{\alpha})_{\alpha\ge 0}$ in which

\begin{align*}
L_0 &:= 1,\\
L_{\alpha} &:= \Phi_{\alpha}\cdot \sum_{24n\equiv 1\bmod{5^{\alpha}}} a(n)q^{\left\lfloor n/5^{\alpha} \right\rfloor},
\end{align*}

\begin{align*}
A := q\cdot\frac{\mathrm{C}(q)}{\mathrm{C}(q^{25})},
\end{align*}

\begin{align*}
U^{(0)}\left(f\right) &:= U_5 \left(A\cdot f\right),\\
U^{(1)}\left(f\right) &:= U_5\left(f\right),\\
U^{(\alpha)}\left(f\right) &:= U^{(\alpha\bmod{2})}\left(f\right),
\end{align*} and

\begin{align*}
L_{\alpha+1} &= U^{(\alpha)} \left(L_{\alpha}\right).
\end{align*}  This very quickly yields the following functions for $\Phi_{\alpha}$:

\begin{align*}
\Phi_{2\alpha - 1} &= \frac{q}{\mathrm{C}(q^5)}, \text{ and } \Phi_{2\alpha} = \frac{q}{\mathrm{C}(q)}.
\end{align*}

We want to know whether $\mathcal{L_{\alpha}}$ converges 5-adically to 0.  As an example, let us select $L_1$:

\begin{align*}
L_1 &= \frac{(q^5;q^5)^3_{\infty}(q^{20};q^{20})^2_{\infty}}{(q^{10};q^{10})^5_{\infty}}\sum_{m=0}^{\infty} a(5m+4)q^{m+1}\\
& = \frac{\eta(5\tau)^3\eta(20\tau)^2}{\eta(10\tau)^5}\cdot q^{1-5/24}\sum_{m=0}^{\infty} a(5m+4)q^{m}.
\end{align*}

In keeping with the notation of \cite[Section 3]{Radu}, we have $M=4, \hat{r}=(-3,5,-2),$ $m=25, t=24$.  The smallest possible value of $N$ to satisfy the $\Delta^{\ast}$ criteria is $N=20$.

Now, the vector $\hat{s} = (0,0,0,3,-5,2)$ satisfies the conditions of \cite[Theorem 45]{Radu}, and $P_{5,\hat{r}}(4) = \{4\}$ \cite[Definition 42]{Radu}.  Finally, we have

\begin{align*}
\alpha = 1-\frac{5}{24} = \frac{19}{24} = \frac{4}{5} + \frac{1}{120} (1(-3) + 2(5) + 4(-2)).
\end{align*}

We have therefore shown that $L_1\in\mathcal{M}(20)$.

Because $U_5^{(\alpha)}\left(f\right)\in\mathcal{M}(20)$ for all $f\in\mathcal{M}(20)$, we have that $\mathcal{L}$ forms a sequence of functions in $\mathcal{M}(20)$.

However, while $L_1\in\mathcal{M}(20)$, it is not necessarily in $\mathcal{M}^{\infty}(20)$.  We need some $\omega\in\mathcal{E}^{\infty}(20)$ that will overcome any other poles that $L_1$ has, i.e.,

\begin{align*}
\omega\cdot L_1\in\mathcal{M}^{\infty}(20)_{\mathbb{Q}}.
\end{align*}  We may take advantage of the fact that 

\begin{align}
\mathcal{M}^{\infty}(20)_{\mathbb{Q}} = \left< \mathcal{E}^{\infty}(20) \right>_{\mathbb{Q}}.
\end{align}  A proof of this will be given in Section 3.2, but for now we take it for granted.  In that case, $\omega\cdot L_1\in\left< \mathcal{E}^{\infty}(20) \right>_{\mathbb{Q}}$.

In order to give the exact expression of $\omega\cdot L_1\in\left< \mathcal{E}^{\infty}(20) \right>_{\mathbb{Q}}$, we take advantage of an algorithm given in \cite{Radu} to produce the following algebra basis for $\left< \mathcal{E}^{\infty}(20) \right>_{\mathbb{Q}}$

\begin{theorem}
Given $N\in\mathbb{Z}_{>0}$, there exist functions $t, g_1, g_2, ..., g_{v}\in \left<\mathcal{E}^{\infty}(N)\right>_{\mathbb{Q}}$ such that for all $i,j$ with $0\le i<j \le v-1$ (with $g_0=1$),
\begin{itemize}
\item $|\mathrm{ord}^{(N)}_{\infty}(t)| = v+1,$
\item $|\mathrm{ord}^{(N)}_{\infty}(g_i)| < |\mathrm{ord}^{(N)}_{\infty}(g_j)|$,
\item $|\mathrm{ord}^{(N)}_{\infty}(g_i)| \not\equiv |\mathrm{ord}^{(N)}_{\infty}(g_j)|\pmod{v+1}$,
\item $|\mathrm{ord}^{(N)}_{\infty}(g_i)| \not\equiv 0\pmod{v+1}$\text{ except when $i=0$},
\item $\left< \mathcal{E}^{\infty}(N) \right>_{\mathbb{Q}} = \left< 1, g_1, ..., g_{v} \right>_{\mathbb{Q}[t]}.$
\end{itemize}
\end{theorem}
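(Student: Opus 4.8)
The plan is to construct the basis explicitly from the structure of the monoid of pole orders realized by eta quotients in $\mathcal{E}^{\infty}(N)$. Since every element of $\mathcal{E}^{\infty}(N)$ has its unique pole at $\infty$, the map $f\mapsto |\mathrm{ord}^{(N)}_{\infty}(f)|$ sends $\mathcal{E}^{\infty}(N)\setminus\{\text{constants}\}$ into $\mathbb{Z}_{>0}$, and because eta quotients are closed under multiplication (adding the exponent vectors $r_\delta$, with orders adding by Ligozat's formula \eqref{ligozat11}), the image is a numerical semigroup $S\subseteq\mathbb{Z}_{\ge 0}$. First I would show $S$ is \emph{cofinite}: the quantity $|\mathrm{ord}^{(N)}_\infty|$ equals the total pole order, which by the valence formula matches the sum of the (finitely many) zero orders at the other cusps, and one can produce eta quotients realizing two coprime pole orders — hence all sufficiently large integers lie in $S$. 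Let $v+1$ be the smallest positive element of $S$, pick $t\in\mathcal{E}^{\infty}(N)$ with $|\mathrm{ord}^{(N)}_\infty(t)|=v+1$, and note $t$ is a Hauptmodul-like generator: the curve $\mathrm{X}_0(N)$ quotient is not relevant, what matters is that $\mathbb{Q}[t]$ contributes exactly the pole orders in $(v+1)\mathbb{Z}_{\ge 0}$.

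Next I would set up the filtration argument. For each residue class $c\in\{1,\dots,v\}$ modulo $v+1$ that is represented in $S$, let $g_c\in\mathcal{E}^{\infty}(N)$ (or a $\mathbb{Q}$-combination thereof, which is why we work in $\langle\mathcal{E}^{\infty}(N)\rangle_{\mathbb{Q}}$ rather than $\mathcal{E}^{\infty}(N)$ itself) be chosen with $|\mathrm{ord}^{(N)}_\infty(g_c)|$ minimal among elements whose pole order is $\equiv c$; relabel these as $g_1,\dots,g_v$ in increasing order of pole magnitude. The bulleted conditions then follow by construction: distinct $g_i$ have pole orders in distinct nonzero residue classes mod $v+1$ (third and fourth bullets), the ordering gives strict inequality of magnitudes (second bullet), and $|\mathrm{ord}^{(N)}_\infty(t)|=v+1$ is the first bullet. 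It remains to prove the spanning statement $\langle\mathcal{E}^{\infty}(N)\rangle_{\mathbb{Q}}=\langle 1,g_1,\dots,g_v\rangle_{\mathbb{Q}[t]}$. One inclusion is immediate. For the other, take any $f\in\langle\mathcal{E}^{\infty}(N)\rangle_{\mathbb{Q}}$ with pole order $n$; write $n=c+(v+1)k$ with $0\le c\le v$, so $|\mathrm{ord}^{(N)}_\infty(g_c\,t^{k'})|=n$ for the appropriate $k'\ge 0$ (using that $|\mathrm{ord}^{(N)}_\infty(g_c)|\le n$, which holds once $n$ is large enough and can be arranged by the cofiniteness of $S$ for the finitely many small exceptional $n$). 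Subtract a suitable $\mathbb{Q}$-multiple of $g_c t^{k'}$ to cancel the leading principal part of $f$; by Theorem 2.6 (the fundamental theorem), since the difference is again a modular function on $\Gamma_0(N)$ with poles only at $\infty$ and strictly smaller pole order, induction on $|\mathrm{ord}^{(N)}_\infty|$ finishes the reduction, the base case being a constant.

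The main obstacle I anticipate is not the semigroup bookkeeping but \emph{producing} the functions $t$ and $g_i$ concretely inside $\langle\mathcal{E}^{\infty}(N)\rangle_{\mathbb{Q}}$ with the stated minimal pole orders — i.e., showing that the residue classes actually \emph{are} all represented and that minimal representatives can be taken as rational combinations of honest eta quotients in $\mathcal{E}^{\infty}(N)$. This is exactly where the algorithm of \cite{Radu} does the work: it searches over exponent vectors $r=(r_\delta)_{\delta\mid N}$ satisfying Newman's conditions \eqref{newman1a}--\eqref{newman1d} together with the sign constraints from \eqref{ligozat11} forcing nonnegative order at every cusp except $\infty$, and then takes $\mathbb{Q}$-linear combinations to hit each needed residue class at the lowest achievable pole order. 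I would therefore phrase the proof so that the existence of $t,g_1,\dots,g_v$ is reduced to a finite, terminating search (guaranteed to succeed by the cofiniteness of $S$ established above), and leave the explicit output — for $N=20$, the basis actually used on $\omega\cdot L_1$ — to the computation in the following subsection.
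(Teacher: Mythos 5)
Your overall architecture is the right one, and it is worth saying up front that the paper itself never proves this theorem: it is imported wholesale from \cite{Radu}, where it is the correctness statement behind the algebra-basis algorithm. Your skeleton --- the pole orders at $\infty$ of elements of $\mathcal{E}^{\infty}(N)$ form an additive subsemigroup of $\mathbb{Z}_{\ge 0}$; take $t$ of minimal positive pole order $v+1$; take one representative of minimal pole order in each nonzero residue class modulo $v+1$; then reduce an arbitrary $f$ by subtracting multiples of monomials $g_c t^{k}$ and finish with the Liouville-type theorem (a modular function with nonnegative order at every cusp is constant) --- is exactly the standard route, and the reduction/induction half of your argument is sound. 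One internal repair is needed even there: you define $v+1$ as the least element of the order set $S$ of single eta quotients, but you choose the $g_c$ (correctly) inside $\langle\mathcal{E}^{\infty}(N)\rangle_{\mathbb{Q}}$. For the subtraction step to be possible at \emph{every} pole order realized by the span, the minimality of $|\mathrm{ord}^{(N)}_{\infty}(g_c)|$ must be minimality among elements of the span, not among eta quotients; since the span is closed under multiplication (so its order set is again a semigroup, with orders adding on products because leading coefficients are nonzero), the clean fix is to work with the order semigroup of the span throughout.

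The genuine gap is your cofiniteness step. The whole theorem stands or falls with the claim that \emph{every} nonzero residue class modulo $v+1$ is realized as a pole order: with exactly $v$ generators $g_1,\dots,g_v$ in pairwise distinct nonzero classes mod $v+1$, all $v$ classes must occur, and this is equivalent to the order semigroup having greatest common divisor $1$. If, say, every realizable pole order were even, no choice of $t$ and $g_i$ could satisfy the third and fourth bullets together with $|\mathrm{ord}^{(N)}_{\infty}(t)|=v+1$, so this is not bookkeeping but the substantive content of the statement. Your justification consists of the sentence ``one can produce eta quotients realizing two coprime pole orders,'' with no construction; the preceding valence-formula remark says nothing about \emph{which} orders occur. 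And the fallback --- ``this is exactly where the algorithm of \cite{Radu} does the work'' --- is circular as a proof, because the termination and success of that search is precisely what the theorem asserts. To close the gap you would need, for arbitrary $N$, explicit exponent vectors satisfying Newman's conditions (\ref{newman1a})--(\ref{newman1d}) and, via Ligozat's formula (\ref{ligozat11}), nonnegative order at every cusp other than $\infty$, whose pole orders at $\infty$ generate $\mathbb{Z}$ (coprimality suffices); that construction is what \cite{Radu} actually supplies and what your proposal leaves out.
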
  To understand the importance of this theorem, let us suppose that\\ $f\in\mathcal{M}^{\infty}(N)_{\mathbb{Q}}$.  We want to determine its membership in $\left< \mathcal{E}^{\infty}(N) \right>_{\mathbb{Q}} = \left< 1, g_1, ..., g_{v} \right>_{\mathbb{Q}[t]}$, in which we assume that the functions $g_j, t$ satisfy the conditions of Theorem 8.  We will describe the membership check algorithm (MW) from \cite{Radu}.

To begin, we set $k=0$, $f_0=f$, and define $m_k := -\mathrm{ord}_{\infty}^{(N)}(f_k)\in\mathbb{Z}_{\ge 0}$.

\begin{enumerate}
\item If $m_k=0$, then go to Step 9.  Otherwise, proceed to Step 2.
\item Expand the principal part and constant of $f_k$, which we represent here as
\begin{align*}
\frac{b(-m_k)}{q^{m_k}} + \frac{b(-m_k+1)}{q^{m_k-1}} + ... + \frac{b(-1)}{q} + b(0),
\end{align*} with $b(n)\in\mathbb{Q}$ for $-m_0\le n\le 0$, and $b(-m_k)\neq 0$.
\item Examine $m_k\pmod{v+1}$.  Notice that the orders of our algebra basis functions give us a complete set of residues modulo $v+1$.  There must be one and only one function $g_{i_k}$ (which may also be $t$) in our basis with a matching residue class.
\item If $|\mathrm{ord}_{\infty}^{(N)}(g_{i_k})| \ge m_k$, then for any $n\in\mathbb{Z}_{\ge 0}$ and any $\alpha\in\mathbb{Z}$,
\begin{align*}
|\mathrm{ord}_{\infty}^{(N)}(f_k - \alpha\cdot g_{i_k}t^n)| \ge m_k.
\end{align*}  Moreover, because $-m_k$ is not equivalent to the order of any other basis element modulo $v+1$, the order of $f_k$ cannot be reduced with respect to $\left< 1, g_1, ..., g_{v} \right>_{\mathbb{Q}[t]}$, and we have disproved membership of $f$, and we end the algorithm.
\item If $0 < |\mathrm{ord}_{\infty}^{(N)}(g_{i_k})| = n_k < m_k$, then we may write
\begin{align*}
m_{k+1} = \Big |\mathrm{ord}_{\infty}^{(N)}\left(f_k - \frac{b(-m_k)}{\mathrm{LC}(g_{i_k}\cdot t^{(n_k - m_k)/(v+1)})} g_{i_k}\cdot t^{(n_k - m_k)/(v+1)}\right) \Big | < m_k,
\end{align*} in which $\mathrm{LC}(h)$ is defined as the leading coefficient of $h$ (for example, 
\\$\mathrm{LC}(f_k) = b(-m_k)$).
\item Now let
\begin{align*}
f_{k+1} = f_k - \frac{b(-m_k)}{\mathrm{LC}(g_{i_k}\cdot t^{(n_k - m_k)/(v+1)})} g_{i_1}\cdot t^{(n_k - m_k)/(v+1)}\in\mathcal{M}^{\infty}(N)_{\mathbb{Q}}.
\end{align*}
\item Set $k=k+1$.
\item If $m_k=0$, then proceed to Step 9.  Otherwise, return to Step 3.
\item If $m_k = 0$, then we have reduced the entire principal part of $f$ to combinations of the principal parts of $t, g_1, ..., g_v$.  Since for each $j$ such that $0\le j\le k$, $f_j$ has rational coefficients, the constant term of $f_k$ must be a rational multiple of 1.  We have therefore demonstrated membership, and we end the algorithm.
\end{enumerate}

Through the MC algorithm, we construct a strictly decreasing sequence of numbers $\{m_0, m_1, m_2, ...\}$ in $\mathbb{Z}_{\ge 0}$.  Such a sequence cannot continue indefinitely, so that we must either disprove membership, or reach $m_M = 0$ for some $M\in\mathbb{Z}_{\ge 0}$, in a finite number of steps.

Theorem 8 therefore gives us a computational means of determining membership in $\left< \mathcal{E}^{\infty}(N) \right>_{\mathbb{Q}}$.

Let us define $\left< \mathcal{E}^{\infty}(20) \right>_{\mathbb{Q}} = \left< 1, G_1, ..., G_v \right>_{\mathbb{Q}[T]},$ in which $T, G_1, ..., G_v$ satisfy the conditions of Theorem 8.

We note that $\omega,T,T^{-1},G_i\in\mathcal{E}(20)$.  Let us suppose for the moment that each of these functions has integer coefficients in its $q$-expansion.  We must therefore have polynomials $p_0, p_1, ..., p_v\in\mathbb{Z}[x]$ such that

\begin{align}
\omega\cdot L_1 &= p_0(T) + p_1(T)G_1 + ... + p_v(T)G_v,\\
L_1 &= \frac{p_0(T)}{\omega} + \frac{p_1(T)}{\omega}G_1 + ... + \frac{p_v(T)}{\omega}G_v.\label{L1E1}
\end{align}

If we apply $U^{(1)} = U_5$ to both sides of (\ref{L1E1}), we then have an expression for $L_2$ in terms of $U^{(1)}\left(T^j G_k/\omega\right)$.  If we were able to find appropriate expansions of these terms (e.g., expansions in terms of $T, G_k$), then we could apply $U^{\alpha}$ arbitrarily many times, and find expansions of $L_{\alpha}$, no matter the size of $\alpha$.

We can simplify matters enormously by imposing an additional condition to the necessary properties of our algebra basis.  We know that $L_1$ has poles at various cusps of $\Gamma_0(20)$.  If we were to choose $T$ to have positive order at the corresponding poles of $L_1$, then we could make the substitution $\omega = T^l$, for $l\in\mathbb{Z}_{>0}$ sufficiently large:

\begin{align*}
T^l\cdot L_1 &= p_0(T) + p_1(T)G_1 + ... + p_v(T)G_v,\\
L_1 &\in\left< 1, G_1, ..., G_v \right>_{\mathbb{Z}[T,T^{-1}]}.
\end{align*}  Now we have only to understand $U^{(i)}\left(T^j G_k\right)$, for $i\in\{0,1\}$, $j\in\mathbb{Z}$, and $k\in\{0,1,...,v\}$.  Moreover, if we are careful to arrange so that $T$ has positive order at every pole exhibited by the functions $A^{i}T^jG_k$ for all $i\in\{0,1\}$, $j\in\mathbb{Z}$, and $k\in\{0, 1,..., v\}$, then we will have

\begin{align*}
U^{(i)}\left(T^jG_k\right)\in \left< 1, G_1, ..., G_v \right>_{\mathbb{Z}[T,T^{-1}]}.
\end{align*}  That is, $\left< 1, G_1, ..., G_v \right>_{\mathbb{Z}[T,T^{-1}]}$ is closed under $U^{(\alpha)}$ for all $\alpha\in\mathbb{Z}_{\ge 0}$.

From this closure theorem, we can construct a relatively efficient algorithm for checking $L_{\alpha}$ for divisibility by powers of 5.  Supposing we want to check our conjecture that $L_{2\alpha}\equiv 0\pmod{5^{\alpha}}$, by examining $0\le \alpha\le 2B$, for some $B\in\mathbb{Z}_{>0}$.

Noting that we can define $L_0:=1$, we can begin by immediately establishing that $L_0\in\left< 1, G_1, ..., G_v \right>_{\mathbb{Z}[T,T^{-1}]}$.

From here, we compute 

\begin{align*}
L_1 = U^{(0)}\left(1\right) = \displaystyle\sum_{\substack{j\in\mathbb{Z},\\ 0\le k\le v}} b_{1,j,k}T^{j}G_k.
\end{align*}  However, as we apply $U^{(\alpha)}$ for increasing $\alpha$, we will find the coefficients $b_{1,j,k}$ become very large.  To resolve this, we reduce each coefficient to the least positive residue modulo $5^B$:

\begin{align*}
L_1^{(B)} = \displaystyle\sum_{\substack{j\in\mathbb{Z},\\ 0\le k\le v}} c_{1,j,k}T^{j}G_k,
\end{align*} with $c_{1,j,k}\equiv b_{1,j,k}\pmod{5^B}$.  We thus define the following sequence of functions:

\begin{align*}
L_0^{(B)} &:= 1,\\
L_{\alpha}^{(B)} &:= U_5^{(\alpha-1)}\left(L_{\alpha-1}^{(B)}\right) \pmod{5^B} = \displaystyle\sum_{\substack{j\in\mathbb{Z},\\ 0\le k\le v}} c_{\alpha,j,k}T^{j}G_k,
\end{align*} with $0\le c_{\alpha,j,k}<5^B$ for all $\alpha, j, k$.

We now give the steps for checking this conjecture:

\begin{enumerate}
\item Begin with $\alpha=0$, $v_0=0$, and $V = \{ v_0 \}$.
\item Expand $L_{\alpha}^{(B)}$ into $\left< 1, G_1, ..., G_v \right>_{\mathbb{Z}[T,T^{-1}]}$: $L_{\alpha}^{(B)} = \displaystyle\sum_{\substack{j\in\mathbb{Z},\\ 0\le k\le v}} c_{\alpha,j,k}T^{j}G_k$.
\item Expand $U_5^{(\alpha)}\left(L_{\alpha}^{(B)}\right) = \displaystyle\sum_{\substack{j\in\mathbb{Z},\\ 0\le k\le v}} c_{\alpha,j,k}U_5^{(\alpha)}\left(T^{j}G_k\right)$.
\item Reduce $U_5^{(\alpha)}\left(L_{\alpha}^{(B)}\right)\pmod{5^{B}}$ to get $L_{\alpha+1}^{(B)} = \displaystyle\sum_{\substack{j\in\mathbb{Z},\\ 0\le k\le v}} c_{\alpha+1,j,k}T^{j}G_k$.
\item Let $v_{\alpha+1}$ be the maximal power of 5 (up to $B$) dividing each nonzero $c_{\alpha +1,j,k}$.
\item Set $V = V\cup \{v_{\alpha+1}\}$.
\item Set $\alpha = \alpha+1$, and return to Step 2.  Continue until $\alpha=2B$.
\item If $v_{2\alpha}=\alpha$ for $0\le \alpha\le B$, then we have verified our conjecture for the first $B$ cases.  Otherwise, the conjecture fails.
\end{enumerate}  Here, the growth of our coefficients $c_{\alpha,j,k}$ is limited by the size of $5^{B}$.  This bound grows exponentially with $B$, but it is far better than the sub-double-exponential coefficient growth that we would otherwise expect.

For example, setting $B=5$ ensures that $L_{\alpha}^{(B)}$ will contain terms smaller than $5^{10}$, of the order of $10^7$.  These numbers are small enough even for a modest laptop to manage, and the conjecture can now be checked and verified for 5 distinct cases.

Now, as we will demonstrate in Section 6, we cannot use this method alone to provide a complete proof of our conjecture.  However, our method is of critical importance for two reasons.  First, it allows us to check our hastily made conjecture while investing relatively little time or computation.  If substantial evidence accumulates in its favor, we may certainly employ more difficult techniques to attempt a proof.

Secondly, our method begins with computation of a very precise algebra basis for $\left< \mathcal{E}^{\infty}(20) \right>_{\mathbb{Q}}$.  As is demonstrated in \cite[Section 4.1]{Smoot}, the functions in this basis are essential for actually completing the proof of the conjecture.  This alone establishes that the full algorithm, with its relative efficiency and economy, may as well be brought to bear before attempting a proof.

\subsection{The Basis}

Of course, the functions $T, G_1, ..., G_v$ need to be directly computed.  The functions $G_k$ may be computed using the algebra basis algorithm in \cite{Radu}, once the function $T$ is known.  However, $T$ must be selected with care, so that its positive-order zeros correspond with any poles possessed by $U_5\left(A^{i}T^jG_k\right)$, for all $(i,j,k)\in\{0,1\}\times\mathbb{Z}\times\{0, 1,..., v\}.$

To begin with, we assume that $T$ has the form

\begin{align*}
T = \prod_{\delta|20}\eta(\delta\tau)^{s_{\delta}}.
\end{align*}  Being a modular function over $\Gamma_0(20)$, we know that $s=(s_{\delta})_{\delta | 20}$ must satisfy the conditions (\ref{newman1a})--(\ref{newman1d}) of Newman's Theorem:

\begin{align*}
&\sum_{\delta | 20} s_{\delta} = 0,\\
&\sum_{\delta | 20} \delta s_{\delta} +24x_1 = 0, \\
&\sum_{\delta | 20} \frac{20}{\delta} s_{\delta} + 24x_2 = 0,\\
&\prod_{\delta | 20} \delta^{|s_{\delta}|} = x_3^2,
\end{align*} with $x_1, x_2, x_3\in\mathbb{Z}$.  What additional conditions are necessary for $T$?

Notice that $A\in\mathcal{M}(100)$, while $T, G_k\in\mathcal{M}(20)$.  Because $\mathcal{M}(20) \subseteq\mathcal{M}(100)$, we may take the product $A^iT^jG_k\in\mathcal{M}(100)$.  Then our $U_5$ operator maps $A^iT^jG_k\in\mathcal{M}(100)$ to

\begin{align*}
f^{(i,j)}_k &= U_5 \left( A^i T^j G_k \right)\\ &= \frac{1}{5}\sum_{r=0}^4 A^i\left( \frac{\tau + r}{5} \right) T^j\left( \frac{\tau + r}{5} \right) G_k\left( \frac{\tau + r}{5} \right)\in\mathcal{M}(20).
\end{align*}  We need to account for any possible poles of $f^{(i,j)}_k$, so that $T^{m}f^{(i,j)}_k\in\mathcal{M}^{\infty}(20)$ for sufficiently large $m\in\mathbb{Z}_{>0}$.  We will consider $A$ before functions $T, G_k$.

We now give a set of representatives for the cusps of $\Gamma_0(20)$, and for those of $\Gamma_0(100)$.  They may be calculated as in \cite[Lemma 5.3]{Radu1}:

\begin{align*}
\mathcal{C}(20) =&\left\{\frac{1}{20},\frac{1}{10},\frac{1}{5},\frac{1}{4},\frac{1}{2},1\right\},\\
\mathcal{C}(100) =&\left\{\frac{1}{100},\frac{1}{50},\frac{1}{25},\frac{1}{20},\frac{1}{10},\frac{3}{20}, \frac{1}{5}, \frac{1}{4}, \frac{3}{10}, \frac{7}{20},\frac{2}{5},\frac{9}{20},\frac{1}{2},\frac{3}{5},\frac{7}{10},\frac{4}{5},\frac{9}{10},1\right\}.
\end{align*}

In the first place, we have an exact form for $A$, which allows us to compute its zeros and poles exactly, via Ligozat's theorem (\ref{ligozat11}).  Doing so, and employing Theorem 3, reveals the following:

\begin{align*}
\mathrm{ord}_{1/100}^{(100)}(A) &= 1,\\
\mathrm{ord}_{1/50}^{(100)}(A) &= -5,\\
\mathrm{ord}_{1/25}^{(100)}(A) &= 4,\\
\mathrm{ord}_{1/4}^{(100)}(A) &= -1,\\
\mathrm{ord}_{1/2}^{(100)}(A) &= 5,\\
\mathrm{ord}_{1}^{(100)}(A) &= -4.
\end{align*}

In particular, $A$ has negative order (i.e., poles) at $1/50, 1/4, 1$.  Because $U_5$ sends $A$ to $\frac{1}{5}\sum_{r=0}^4 A^i\left( (\tau + r)/5 \right)$, we need to examine the possible rational numbers $\tau$ may approach so that $(\tau + r)/5$ approaches a rational number corresponding to the cusps at $1/50, 1/4, 1$.

In Table \ref{title2a} we take $\tau$ to approach an element of $\mathcal{C}(20)$.  In the process, $(\tau+r)/5$ will tend to a rational number for $r=0,1,2,3,4$.  We then take the element in $\mathcal{C}(100)$ representing the same cusp as $(\tau+r)/5$ through use of Theorem 3 above.  For example, as $\tau\rightarrow 1/10$, and for $r=3$, $(\tau+r)/5\rightarrow 31/50$.  However,  if we set $a_1/c_1 = 1/50\in\mathcal{C}(100)$ and $a/c = 31/50$, and take $m=31, n=0$, then the congruences of Theorem 3 are satisfied, so that for $\tau\rightarrow 1/10$ and $r=3$, we have the corresponding cusp $1/50$.

\begin{table}[!ht]
\begin{center}
\scalebox{0.8}{\begin{tabular}{ l | c  c  c  c  r }
 & & & $r$\\
\hline\\
Elements $a/c$ of $\mathcal{C}(20)$ Approached by $\tau$\ \ \           & $0$ & 1 & 2 & 3 & 4 \\
\hline \\
$\frac{1}{20}$          & $\frac{1}{100}$ & $\frac{1}{100}$ & $\frac{1}{100}$ & $\frac{1}{100}$ & $\frac{1}{100}$ \\ \\
$\frac{1}{10}$          & $\frac{1}{50}$ & $\frac{1}{50}$ & $\frac{1}{50}$ & $\frac{1}{50}$ & $\frac{1}{50}$ \\ \\
$\frac{1}{5}$            & $\frac{1}{25}$ & $\frac{1}{25}$ & $\frac{1}{25}$ & $\frac{1}{25}$ & $\frac{1}{25}$ \\ \\
$\frac{1}{4}$            & $\frac{1}{20}$ & $\frac{1}{4}$ & $\frac{9}{20}$ & $\frac{3}{20}$ & $\frac{7}{20}$ \\ \\
$\frac{1}{2}$            & $\frac{1}{10}$ & $\frac{3}{10}$ & $\frac{1}{2}$ & $\frac{7}{10}$ & $\frac{9}{10}$ \\  \\
1               & $\frac{1}{5}$ & $\frac{2}{5}$ & $\frac{3}{5}$ & $\frac{4}{5}$ & $1$ \\ \\
\end{tabular}
}
\caption{Elements of $\mathcal{C}(100)$ Approached by $\frac{\tau + r}{5}$}\label{title2a}
\end{center}
\end{table}

Notice that just three cusps over $\Gamma_0(20)$ (represented by $1, 1/2, 1/4$) correspond to 15 of the 18 cusps of $\Gamma_0(100)$.  The remaining three cusps of $\Gamma_0(20)$ ($1/5, 1/10, 1/20$) correspond bijectively to the remaining cusps over $\Gamma_0(100)$ ($1/25, 1/50, 1/100$).

We see that for $(\tau+r)/5$ to approach the cusps $1/50, 1/4, 1$, $\tau$ must approach $1/10, 1/4, 1$, respectively.

In other words, $U_5\left(A\right)$ has possible poles at the cusps $1/10, 1/4, 1$.  We therefore want our $T$ to have positive order at these cusps.  We therefore have the following system of inequalities that we know are necessary (but not yet sufficient) for $T = \prod_{\delta|20}\eta(\delta\tau)^{s_{\delta}}$:

\begin{align*}
&\frac{1}{24}\sum_{\delta | 20} \frac{\mathrm{gcd}(10,\delta)^2}{\delta} s_{\delta} \ge 1,\\
&\frac{5}{24}\sum_{\delta | 20} \frac{\mathrm{gcd}(4,\delta)^2}{\delta} s_{\delta} \ge 1,\\
&\frac{5}{6}\sum_{\delta | 20} \frac{\mathrm{gcd}(1,\delta)^2}{\delta} s_{\delta} \ge 1.\\
\end{align*}

Next we consider $G_k$ for $1\le k\le v$.  By our definition, we want $G_k\in\mathcal{M}^{\infty}(20)$, so that $G_k$ only has a pole at the cusp at $\infty$ with respect to $\Gamma_0(20)$.  Table \ref{title2b} below is analogous to Table \ref{title2a} but only considering the cusps of $\Gamma_0(20)$.

\begin{table}[!ht]
\begin{center}
\scalebox{0.8}{\begin{tabular}{ l | c  c  c  c  r }
 & & & $r$\\
\hline\\
Elements $a/c$ of $\mathcal{C}(20)$ Approached by $\tau$ \ \ \           & $0$ & 1 & 2 & 3 & 4 \\
\hline \\
$\frac{1}{20}$          & $\frac{1}{20}$ & $\frac{1}{20}$ & $\frac{1}{20}$ & $\frac{1}{20}$ & $\frac{1}{20}$ \\ \\
$\frac{1}{10}$          & $\frac{1}{10}$ & $\frac{1}{10}$ & $\frac{1}{10}$ & $\frac{1}{10}$ & $\frac{1}{10}$ \\ \\
$\frac{1}{5}$            & $\frac{1}{5}$ & $\frac{1}{5}$ & $\frac{1}{5}$ & $\frac{1}{5}$ & $\frac{1}{5}$ \\ \\
$\frac{1}{4}$            & $\frac{1}{20}$ & $\frac{1}{4}$ & $\frac{1}{20}$ & $\frac{1}{20}$ & $\frac{1}{20}$ \\ \\
$\frac{1}{2}$            & $\frac{1}{10}$ & $\frac{1}{10}$ & $\frac{1}{2}$ & $\frac{1}{10}$ & $\frac{1}{10}$ \\  \\
1               & $\frac{1}{5}$ & $\frac{1}{5}$ & $\frac{1}{5}$ & $\frac{1}{5}$ & $1$ \\ \\
\end{tabular}
}
\caption{Elements of $\mathcal{C}(20)$ Approached by $\frac{\tau + r}{5}$}\label{title2b}
\end{center}
\end{table}

Notice that the cusp at $\infty$ is represented in $\Gamma_0(20)$ by $1/20$, which may be approached as $\tau$ approaches the cusps $1/20, 1/4$ over $\Gamma_0(20)$.  Because we want $T$ to have a pole at $1/20$, we therefore only need to account for the additional possible pole at $1/4$, which we already accounted for.

Therefore, a function $T$ satisfying these three inequalities, together with (\ref{newman1a})--(\ref{newman1d}), will satisfy

\begin{align*}
T^{m}U_5\left(A^iT^jG_k\right)\in\mathcal{M}^{\infty}(20)
\end{align*} for $i=0,1, j\ge 0, 1\le k\le v$, with sufficiently large $m$.

Finally, there is the question of negative powers of $T$.  We know that because $T$ must have positive order at $1/10, 1/4, 1$, therefore $T^{-1}$ must have negative order at these cusps.  This means of course that $T$ must have positive order at any cusp representative $a/c$ such that

\begin{align*}
\frac{a/c + r}{5} = \frac{a + cr}{5c} \in\left\{ \frac{1}{10}, \frac{1}{4}, 1 \right\}.
\end{align*}  Examining our table above, it can quickly be seen that these values are approached as $\tau$ approaches the cusps at $1/10, 1/2, 1/4, 1$.  This induces another constraint: $T$ must have positive order at $1/2$.

We now have the additional inequality

\begin{align*}
&\frac{5}{24}\sum_{\delta | 20} \frac{\mathrm{gcd}(2,\delta)^2}{\delta} s_{\delta} \ge 1.
\end{align*}

We now have conditions for the behavior of $T$ at every cusp of $\Gamma_0(20)$ except for $\frac{1}{5}$.  Since $A, G_k$ do not have poles at $1/5$, we need only worry about $T$ and $T^{-1}$.  Suppose first that $T$ has positive order at $1/5$.  Then of course, $T^{-1}$ must have negative order at $1/5$.  Which cusps over $\Gamma_0(20)$ correspond to a potential pole at $1/5$?  Examining our table above, we see that the only possible poles induced would occur at $1/5, 1$.  Now $T$ already has positive order at $1$, as well as at $1/5$ by hypothesis.

Therefore, since the cusp at $1/5$ causes no problems whether $T$ has positive or zero order there, we do not need to induce any specific condition at the cusp (besides the nonnegative order of $T$).

\begin{align*}
&\frac{1}{6}\sum_{\delta | 20} \frac{\mathrm{gcd}(5,\delta)^2}{\delta} s_{\delta} \ge 0.
\end{align*}

We now have sufficient conditions from which to derive $T$, but we give one more mild condition for the sake of efficiency.  We clearly want $|\mathrm{ord}_{1/20}^{(20)}(T)|$ to be as small as possible.  We therefore take note of the fact that

\begin{align*}
\mathrm{ord}_{1/20}^{(20)}(T) = \frac{1}{24}\sum_{\delta | 20} \delta s_{\delta},
\end{align*} so that in our Newman system, $x_1 = -\mathrm{ord}_{1/20}^{(20)}(T)$.  We therefore add the additional tentative condition to our system:

\begin{align*}
x_1 = 1,
\end{align*} to search for the possibility that there exists an acceptable $T$ with $\mathrm{ord}_{1/20}^{(20)}(T) = -1$.  If our system contains no solution, then we must reset $x_1= 2$ and continue.

Our complete system, then, is 

\begin{align*}
&\sum_{\delta | 20} s_{\delta} = 0,\\
&\sum_{\delta | 20} \delta s_{\delta} +24x_1 = 0, \\
&\sum_{\delta | 20} \frac{20}{\delta} s_{\delta} + 24x_2 = 0,\\
&\prod_{\delta | 20} \delta^{|s_{\delta}|} = x_3^2,\\
&\frac{1}{10}\sum_{\delta | 20} \frac{\mathrm{gcd}(10,\delta)^2}{\delta} s_{\delta} \ge 1,\\
&\frac{1}{6}\sum_{\delta | 20} \frac{\mathrm{gcd}(5,\delta)^2}{\delta} s_{\delta} \ge 0,\\
&\frac{5}{24}\sum_{\delta | 20} \frac{\mathrm{gcd}(4,\delta)^2}{\delta} s_{\delta} \ge 1,\\
&\frac{5}{24}\sum_{\delta | 20} \frac{\mathrm{gcd}(2,\delta)^2}{\delta} s_{\delta} \ge 1,\\
&\frac{1}{6}\sum_{\delta | 20} \frac{\mathrm{gcd}(1,\delta)^2}{\delta} s_{\delta} \ge 1,\\
&x_1 = -\mathrm{ord}_{1/20}^{(20)}(T),
\end{align*} with $x_1, x_2, x_3\in\mathbb{Z}$.

This system allows us to obtain a vector $s$ that is optimal with respect to $x_1$.

We made use of the software package 4ti2 \cite{4ti2} to solve this system, and discovered the solution vector $s=(2,0,2,-2,8,-10)$, of minimal order $x_1 = 5$.  This gives us the function

\begin{align*}
T &= \frac{\eta(\tau)^2\eta(4\tau)^2\eta(10\tau)^{8}}{\eta(5\tau)^2\eta(20\tau)^{10}} = \frac{1}{q^5}\frac{(q;q)^2_{\infty}(q^4;q^4)^2_{\infty}(q^{10};q^{10})^{8}_{\infty}}{(q^5;q^5)^2_{\infty}(q^{20};q^{20})^{10}_{\infty}}.
\end{align*}  From here we may apply the algebra basis function of \cite[Section 2.1, Algorithm AB]{Radu} to construct the functions $G_k$.

\begin{theorem}

Let
\begin{align}
T &:= \frac{\eta(\tau)^2\eta(4\tau)^2\eta(10\tau)^{8}}{\eta(5\tau)^2\eta(20\tau)^{10}} = \frac{1}{q^5}\frac{(q;q)^2_{\infty}(q^4;q^4)^2_{\infty}(q^{10};q^{10})^{8}_{\infty}}{(q^5;q^5)^2_{\infty}(q^{20};q^{20})^{10}_{\infty}}\\
H &:= \frac{\eta(4\tau)\eta(5\tau)^5}{\eta(\tau)\eta(20\tau)^5} = \frac{1}{q^3}\frac{(q^4;q^4)_{\infty}(q^{5};q^{5})^{5}_{\infty}}{(q;q)_{\infty}(q^{20};q^{20})^{5}_{\infty}},\\
G &:= \frac{\eta(4\tau)^4\eta(10\tau)^2}{\eta(2\tau)^2\eta(20\tau)^4} = \frac{1}{q^2}\frac{(q^4;q^4)^4_{\infty}(q^{10};q^{10})^{2}_{\infty}}{(q^2;q^2)^2_{\infty}(q^{20};q^{20})^{4}_{\infty}}.
\end{align}  Then

\begin{align}
\mathcal{M}^{\infty}(20)_{\mathbb{Q}} = \left< 1, G_1, G_2, G_3, G_4 \right>_{\mathbb{Q}[T]},\label{fullMod1}
\end{align} with

\begin{align}
G_1 &= G,\\
G_2 &= H - G,\\
G_3 &= G^2,\\
G_4 &= (H - G)^2.
\end{align}  Moreover,

\begin{align}
U_5\left(A^iT^jG_k\right)\in\left< 1, G_1, G_2, G_3, G_4 \right>_{\mathbb{Z}[T,T^{-1}]},\label{Talready1}
\end{align} for all $(i,j,k)\in\{0,1\}\times\mathbb{Z}\times\{0,1,2,3,4\}$.

\end{theorem}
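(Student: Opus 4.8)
The plan is to split the proof into three stages: first identify $T$, $H$, $G$ as eta quotients whose only pole over $\Gamma_0(20)$ is at $\infty$, and read off the orders there of $1,G_1,\dots,G_4$ and of $T$; then deduce the module identity \eqref{fullMod1}; and finally establish the $U_5$-closure \eqref{Talready1}. For the first stage I would apply Newman's Theorem to the exponent vector $s=(2,0,2,-2,8,-10)$ of $T$, to $(-1,0,1,5,0,-5)$ of $H$, and to $(0,-2,4,0,2,-4)$ of $G$ (indexed by the divisors $1,2,4,5,10,20$ of $20$), and also to $A=\eta(2\tau)^5\eta(25\tau)^3\eta(100\tau)^2/(\eta(\tau)^3\eta(4\tau)^2\eta(50\tau)^5)$ over $\Gamma_0(100)$, the latter obtained by rewriting $q\,\mathrm{C}(q)/\mathrm{C}(q^{25})$; checking conditions \eqref{newman1a}--\eqref{newman1d} places $T,H,G\in\mathcal{M}(20)$ and $A\in\mathcal{M}(100)$. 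Then I would evaluate Ligozat's formula \eqref{ligozat11} at each cusp of $\mathcal{C}(20)$ to confirm that $T$ has order $-5$ at $1/20$, strictly positive order at $1/10,1/4,1/2,1$, and order $0$ at $1/5$, and that $H,G$ are holomorphic away from $1/20$, where they have orders $-3$ and $-2$. Since $\mathcal{M}^{\infty}(20)$ is a ring, $G_1,\dots,G_4$ lie in it; from the normalizations $T=q^{-5}(1+\cdots)$, $H=q^{-3}(1+\cdots)$, $G=q^{-2}(1+\cdots)$ the orders of $G_1=G$, $G_2=H-G$, $G_3=G^2$, $G_4=(H-G)^2$ at $\infty$ are $2,3,4,6$, each with leading coefficient $1$. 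In particular $\{0,2,3,4,6\}$ is a complete residue system modulo $5$, none but $0$ is divisible by $5$, and $|\mathrm{ord}_{\infty}^{(20)}(T)|=5=v+1$ with $v=4$, so $T,G_1,\dots,G_4$ meet the hypotheses of Theorem 8.

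For the second stage, the inclusion $\langle 1,G_1,\dots,G_4\rangle_{\mathbb{Q}[T]}\subseteq\mathcal{M}^{\infty}(20)_{\mathbb{Q}}$ is immediate. For the reverse I would invoke the correctness of Radu's algebra-basis algorithm \cite{Radu}: run with $N=20$ and this $T$, its output is exactly $G_1,\dots,G_4$, and the algorithm is guaranteed to produce generators of $\langle\mathcal{E}^{\infty}(20)\rangle_{\mathbb{Q}}=\mathcal{M}^{\infty}(20)_{\mathbb{Q}}$. The conceptual reason this succeeds is that $\mathrm{X}_0(20)$ has genus $1$, so by Riemann--Roch the subspace of $\mathcal{M}^{\infty}(20)_{\mathbb{Q}}$ of functions with pole order $\le m$ at $\infty$ has dimension $m$ for every $m\ge 1$, whereas the monomials $T^aG_k$ ($a\ge 0$, $0\le k\le 4$) of pole order $\le m$ already span a subspace of that dimension, their $\infty$-orders being exactly $\{0\}\cup\{2,3,\dots,m\}$; hence the inclusion is an equality, which is \eqref{fullMod1}. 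The membership algorithm of the preceding subsection makes this reduction effective, and I would reuse it below.

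The third stage is the \emph{main obstacle}. Fix $(i,j,k)\in\{0,1\}\times\mathbb{Z}\times\{0,1,2,3,4\}$. Because $A^iT^jG_k\in\mathcal{M}(100)$ and $5^2\mid 100$, the $U_5$-stability theorem gives $U_5(A^iT^jG_k)\in\mathcal{M}(20)$, holomorphic on $\mathbb{H}$. The delicate point is to confine its poles: by part~3 of the $U_{\ell}$-lemma, $U_5(A^iT^jG_k)$ is, up to possible behaviour at $1/5$, a sum of the translates $(A^iT^jG_k)((\tau+r)/5)$ for $0\le r\le 4$, so as $\tau\to a/c\in\mathcal{C}(20)$ a pole can occur only if $(\tau+r)/5$ approaches a cusp of $\Gamma_0(100)$ at which $A^iT^jG_k$ has a pole. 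From the first stage, the poles of $A$ are the cusps $1/50,1/4,1$ of $\Gamma_0(100)$, the poles of $G_k$ and of $T^j$ with $j\ge 0$ lie over the cusp $1/20$, and those of $T^j$ with $j<0$ lie over $1/10,1/4,1/2,1$ (and \emph{not} over $1/5$, since $\mathrm{ord}_{1/5}^{(20)}(T)=0$); inserting these into Tables~\ref{title2a} and \ref{title2b} shows that in every case the only $a/c$ that can occur lie in $\{1/20,1/10,1/4,1/2,1\}$. Since $T$ has negative order only at $1/20$ and strictly positive order at $1/10,1/4,1/2,1$, there is an integer $m=m(i,j,k)>0$ with $T^mU_5(A^iT^jG_k)\in\mathcal{M}^{\infty}(20)$.

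It remains to descend to integer coefficients. The $q$-expansions of $A$, $T$, $T^{-1}$ and the $G_k$ all have integer coefficients, so $T^mU_5(A^iT^jG_k)$ does too; by \eqref{fullMod1} it equals $\sum_{a\ge 0,\,0\le k\le 4}c_{a,k}T^aG_k$ for some $c_{a,k}\in\mathbb{Q}$. Since the monomials $T^aG_k$ have pairwise distinct orders at $\infty$ and leading coefficient $1$, running the membership algorithm on this function is a back-substitution in which, at each step, the current leading pole coefficient is an integer and one subtracts that integer times a monic basis monomial; hence $c_{a,k}\in\mathbb{Z}$. Dividing through by $T^m$ gives $U_5(A^iT^jG_k)\in\langle 1,G_1,\dots,G_4\rangle_{\mathbb{Z}[T,T^{-1}]}$, which is \eqref{Talready1}. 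I expect the cusp bookkeeping under $\tau\mapsto(\tau+r)/5$---verifying, over all eighteen cusps of $\Gamma_0(100)$ and for both signs of $j$, that the poles of the $U_5$-images really are dominated by $T$---to be the most error-prone step, with the integrality descent the next most delicate.
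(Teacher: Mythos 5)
Your proposal is correct and follows the paper's proof in all essentials: Newman's conditions \eqref{newman1a}--\eqref{newman1d} and Ligozat's formula \eqref{ligozat11} to place $T,H,G$ in $\mathcal{M}(20)$ (and $A$ in $\mathcal{M}(100)$) and to locate their poles and zeros, the genus-one property of $\mathrm{X}_0(20)$ to force \eqref{fullMod1}, and the cusp bookkeeping of Tables \ref{title2a} and \ref{title2b} together with the strictly positive order of $T$ at $1/10,1/4,1/2,1$ to obtain \eqref{Talready1} after clearing poles by a power of $T$ and descending to integer coefficients. The one genuine divergence is how the genus-one input is deployed: you count dimensions via Riemann--Roch ($\dim L(m[\infty])=m$ for $m\ge 1$) and note that the monomials $T^aG_k$, having pairwise distinct pole orders filling $\{0\}\cup\{2,3,\dots\}$, already supply that many independent functions, so the inclusion is an equality; the paper instead runs the membership reduction and invokes the Weierstrass gap theorem (cited deliberately in a Riemann--Roch-free form \cite{Paule3}) to exclude the only way the reduction can stall, namely a residual function with pole order exactly one at $\infty$. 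The two arguments are equivalent in substance---yours is slightly more direct, the paper's is tied to the algorithm it actually executes. Two smaller points: your appeal to the algebra-basis algorithm ``producing generators of $\left<\mathcal{E}^{\infty}(20)\right>_{\mathbb{Q}}=\mathcal{M}^{\infty}(20)_{\mathbb{Q}}$'' would be circular as stated, since that equality is precisely the corollary the paper deduces from this theorem; fortunately your dimension count makes that appeal superfluous. And your monic back-substitution argument for the integrality of the coefficients is a welcome sharpening of a step the paper treats only informally.
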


With $T$ derived, the algebra basis may be found with Radu's basis algorithm.  We prove its validity using the properties of the corresponding modular curve $\mathrm{X}_0(20)$, together with the Weierstrass gap theorem.

Notice that we restrict our coefficients to rational numbers, but that our theorem applies equally if we extend our field to the whole of $\mathbb{C}$.

\begin{proof}

Condition (\ref{Talready1}) was verified in the construction of $T$.  We are left to verify (\ref{fullMod1}).

Conditions (\ref{newman1a})--(\ref{newman1d}) can be quickly checked with respect to $G, H,$ and $T$, so that

\begin{align}
\mathcal{M}^{\infty}(20)_{\mathbb{Q}} \supseteq \left< 1, G_1, G_2, G_3, G_4 \right>_{\mathbb{Q}[T]}.\label{supmod1}
\end{align}  Let $f\in\mathcal{M}^{\infty}(20)_{\mathbb{Q}}$.  We want to prove that $f\in\left< 1, G_1, G_2, G_3, G_4 \right>_{\mathbb{Q}[T]}$.

With only one pole, $f$ has an expansion

\begin{align*}
f = \frac{b(-m_0)}{q^{m_0}} + \frac{b(-m_0+1)}{q^{m_0-1}} + ... + \frac{b(-1)}{q} + b(0) + \sum_{n=0}^{\infty} b(n)q^n,
\end{align*} with $b(n)\in\mathbb{Q}$ for all $n\ge -m_0$, and $b(-m_0)\neq 0$.

We can now apply the MC algorithm given in Section 3.1.  If we first assume that $m_0\neq 1$, then there exist $a,b\in\mathbb{Z}_{\ge 0}$ such that $m_0=5a +b$, and\\ $b\in\{0,2,3,4,6\}$.  Examining the orders of the functions $T, G_k$, we find that

\begin{align*}
-\mathrm{ord}_{\infty}^{(20)} (T) &= 5,\\
-\mathrm{ord}_{\infty}^{(20)} (G_1) &= 2,\\
-\mathrm{ord}_{\infty}^{(20)} (G_2) &= 3,\\
-\mathrm{ord}_{\infty}^{(20)} (G_3) &=4,\\
-\mathrm{ord}_{\infty}^{(20)} (G_4) &=6.
\end{align*}  We therefore have

\begin{align*}
-\mathrm{ord}_{\infty}^{(20)}\left(f_1\right) < m_0,
\end{align*} for

\begin{align*}
f_1 = f - \frac{b(m_0)}{\mathrm{LC}(T^a G_{k_1})}\cdot T^a G_{k_1} \in\mathcal{M}^{\infty}(20)_{\mathbb{Q}},
\end{align*} with some $k_1\in\{0,1,2,3,4\}$ (and taking $G_0=1$) such that $-\mathrm{ord}_{\infty}^{(20)} (G_{k_1}) = b$.

As described in the MC algorithm, we construct a sequence of functions $\mathcal{F}=\{f, f_1, f_2, ... \}$, each of which has a pole only at infinity, with $m_j := |\mathrm{ord}_{\infty}^{(20)}(f_{j})|$, and $m_{j+1}<m_j$ for all $j\ge 0$.  Membership is excluded if and only if within this sequence a function is produced with order exactly $-1$ at $\infty$.  If we can prove that such a function can never be produced, then our sequence of functions must ultimately have order $0$ at $\infty$, and membership is guaranteed.

Let us suppose that such a function does exist in our sequence, i.e., for some $M\in\mathbb{Z}_{\ge 0}$, $f_M\in\mathcal{F}$ has a pole only at $\infty$ and with order exactly $-1$.  In that case, $(f_M)^n$ will have order $-n$ for all $n\in\mathbb{Z}_{>0}$.  In other words, we can produce a function in $\mathcal{M}^{\infty}(20)$ with a pole only at $\infty$, and any order at that pole.

However, the functions of $\mathcal{M}^{\infty}(20)$ correspond bijectively to the functions of the modular curve $\mathrm{X}_0(20)$ with a pole only at $[\infty]$ \cite[Chapters 2, 3]{Diamond}.  This curve has genus 1 \cite[Chapter 4, Theorem 15]{Schoeneberg}, and the Weierstrass gap theorem \cite{Paule3} therefore requires that exactly one order must exist which cannot be assumed by any function over $\mathrm{X}_0(20)$ with a pole only at $[\infty]$.

But we just demonstrated that $f_M$ taken to positive powers may assume any order at $\infty$, and that we can therefore construct functions over $\mathrm{X}_0(20)$ with a single pole of any order.  We have a contradiction, and must therefore reject the hypothesis that such an $f_M$ is ever produced.

Because this is the only possible case in which membership fails, we must conclude that we can complete our reduction of $f$, so that

\begin{align*}
f\in\left< 1, G_1, G_2, G_3, G_4 \right>_{\mathbb{Q}[T]}.
\end{align*}  We then have

\begin{align}
\mathcal{M}^{\infty}(20)_{\mathbb{Q}} \subseteq \left< 1, G_1, G_2, G_3, G_4 \right>_{\mathbb{Q}[T]},\label{submod1}
\end{align} which, with (\ref{supmod1}), yields equality.
\end{proof}

\begin{corollary}
\begin{align*}
\mathcal{M}^{\infty}(20)_{\mathbb{Q}} = \left< \mathcal{E}^{\infty}(20) \right>_{\mathbb{Q}}
\end{align*}
\end{corollary}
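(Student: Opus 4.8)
The plan is to read the corollary off from the preceding theorem, that is, from (\ref{fullMod1}). One inclusion is immediate. By definition every element of $\mathcal{E}^{\infty}(20)$ is a modular function over $\Gamma_0(20)$ whose only pole lies at the cusp $\infty$, and its expansion at $\infty$ has integer, hence rational, coefficients. Since $\mathcal{M}^{\infty}(20)_{\mathbb{Q}}$ is closed under addition and under multiplication by rational scalars, it contains every $\mathbb{Q}$-linear combination of elements of $\mathcal{E}^{\infty}(20)$, so $\left<\mathcal{E}^{\infty}(20)\right>_{\mathbb{Q}}\subseteq\mathcal{M}^{\infty}(20)_{\mathbb{Q}}$.

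For the reverse inclusion I would first show that $\left<\mathcal{E}^{\infty}(20)\right>_{\mathbb{Q}}$ is not merely a $\mathbb{Q}$-vector space but a $\mathbb{Q}$-algebra. A product of two eta quotients is again an eta quotient (the exponent vectors simply add), a product of two modular functions over $\Gamma_0(20)$ is again such, and the order of a function at a cusp is additive under products --- as one sees directly, or from the manifestly additive formula (\ref{ligozat11}). Hence the product of two functions whose only pole is at $\infty$ again has its only pole at $\infty$, so $\mathcal{E}^{\infty}(20)$ is closed under multiplication, and therefore so is its $\mathbb{Q}$-span.

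Next I would verify that the three building blocks $T$, $G$, $H$ themselves lie in $\mathcal{E}^{\infty}(20)$. Each is an eta quotient, and each satisfies Newman's conditions (\ref{newman1a})--(\ref{newman1d}); a finite application of Ligozat's formula (\ref{ligozat11}) at the six cusp representatives in $\mathcal{C}(20)$ confirms that each has nonnegative order at every cusp other than $\infty$. (This is precisely the kind of cusp computation already displayed above for $A$, and it is implicit in the construction of $T$ and in the proof of the preceding theorem.) Since $G_1 = G$, $G_2 = H - G$, $G_3 = G^2$ and $G_4 = (H-G)^2$, the ring $\mathbb{Q}[T,G,H]$ --- which contains $\left<1,G_1,G_2,G_3,G_4\right>_{\mathbb{Q}[T]}$ --- is contained in the $\mathbb{Q}$-algebra $\left<\mathcal{E}^{\infty}(20)\right>_{\mathbb{Q}}$, because that algebra contains $T$, $G$ and $H$. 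Combining this with (\ref{fullMod1}) and with the first inclusion gives $\mathcal{M}^{\infty}(20)_{\mathbb{Q}} = \left<1,G_1,G_2,G_3,G_4\right>_{\mathbb{Q}[T]} = \left<\mathcal{E}^{\infty}(20)\right>_{\mathbb{Q}}$.

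The only step involving any computation is the cusp-order check for $T$, $G$ and $H$, and that is entirely routine: it is just a substitution into (\ref{ligozat11}) at the six known cusp representatives of $\Gamma_0(20)$, wholly analogous to the order table computed for $A$. So I anticipate no genuine obstacle; the substance of the corollary is already contained in the preceding theorem, and the corollary merely re-expresses its $\mathbb{Q}[T]$-basis in terms of the eta quotients from which that basis is assembled.
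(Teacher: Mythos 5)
Your proposal is correct and follows essentially the same route as the paper: the paper's proof is just the one-line chain $\mathcal{M}^{\infty}(20)_{\mathbb{Q}} = \left< 1, G_1, G_2, G_3, G_4 \right>_{\mathbb{Q}[T]} \subseteq \left< \mathcal{E}^{\infty}(20) \right>_{\mathbb{Q}} \subseteq \mathcal{M}^{\infty}(20)_{\mathbb{Q}}$, relying on the preceding theorem exactly as you do. The details you supply (closure of $\mathcal{E}^{\infty}(20)$ under products and membership of $T$, $G$, $H$, already verified in the construction of the basis) are precisely what the paper leaves implicit.
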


\begin{proof}
\begin{align*}
\mathcal{M}^{\infty}(20)_{\mathbb{Q}} = \left< 1, G_1, G_2, G_3, G_4 \right>_{\mathbb{Q}[T]} \subseteq \left< \mathcal{E}^{\infty}(20) \right>_{\mathbb{Q}}\subseteq \mathcal{M}^{\infty}(20)_{\mathbb{Q}}.
\end{align*}
\end{proof}

Finally, we give the order of $T$ at its poles and zeros through (\ref{ligozat11}):

\begin{align*}
\mathrm{ord}_{1/20}^{(20)}(T) &= -5,\\
\mathrm{ord}_{1/10}^{(20)}(T) &= 1,\\
\mathrm{ord}_{1/5}^{(20)}(T) &= 0,\\
\mathrm{ord}_{1/4}^{(20)}(T) &= 1,\\
\mathrm{ord}_{1/2}^{(20)}(T) &= 1,\\
\mathrm{ord}_{1}^{(20)}(T) &= 2.
\end{align*}

\subsection{Powers of $T$}

We now give an outline for how to compute the powers $m\in\mathbb{Z}_{>0}$ so that 

\begin{align*}
T^{m}U_5\left(A^iT^jG_k\right)\in\mathcal{M}^{\infty}(20).
\end{align*}  It is clear, by our definition of $T$, that such a power must exist.

To begin, let us suppose that $h_1, h_2\in\mathcal{M}(100)$, and that $U_5\left(h_1\right), U_5\left(h_2\right)$ have poles which are canceled by the zeros of $T$.  In this case, nonnegative integers $m_1, m_2$ must exist such that

\begin{align*}
T(\tau)^{m_1}\cdot U_5\left(h_1\right) &= U_5\left(T(5\tau)^{m_1}h_1(\tau)\right)\in\mathcal{M}^{\infty}(20),\\
T(\tau)^{m_2}\cdot U_5\left(h_2\right) &= U_5\left(T(5\tau)^{m_2}h_2(\tau)\right)\in\mathcal{M}^{\infty}(20).
\end{align*}

Given $i\in\{1,2\}$, one way of ensuring that $U_5\left(T(5\tau)^{m_i}h_i(\tau)\right)$ has no poles over $\Gamma_0(20)$ besides that at $1/20$ is by ensuring that $T(5\tau)^{m_i}h_i(\tau)$ has no poles over $\Gamma_0(100)$ other than those which will manifest in $\Gamma_0(20)$ at $1/20$.  That is, we need to ensure that $T(5\tau)^{\xi_i}h_i(\tau)\in\mathcal{M}^{\infty}(100)$.  Any cusp of $\Gamma_0(100)$ other than $1/100$ can be approached by $(\tau+r)/5$ as $\tau$ approaches a cusp not represented by $1/20$ (see Table \ref{title2a}).

In this case, if we wish to examine $U_5\left(h_1\cdot h_2\right)$, we may note that

\begin{align*}
T(\tau)^{m_1+m_2}\cdot U_5\left(h_1\cdot h_2\right) &= U_5\left(T(5\tau)^{m_1+m_2}h_1(\tau)\cdot h_2(\tau)\right)\\
&=U_5\left(T(5\tau)^{m_1}h_1(\tau)\cdot T(5\tau)^{m_2}h_2(\tau)\right).
\end{align*}  Therefore, if $T(5\tau)^{m_1}h_1(\tau)$ and $T(5\tau)^{m_2}h_2(\tau)$ are both members of $\mathcal{M}^{\infty}(100)$, then their product must be as well.  But this means that 

\begin{align*}
T(\tau)^{m_1+m_2}\cdot U_5\left(h_1\cdot h_2\right)\in\mathcal{M}^{\infty}(20).
\end{align*}

Therefore, if we have sufficient powers of $T$ to push two functions $U_5(h_1), U_5(h_2)$ into $\mathcal{M}^{\infty}(20)$, then we need only add the powers together to have a sufficient power of $T$ to push $U_5(h_1\cdot h_2)$ into $\mathcal{M}^{\infty}(20)$. 

So in order to work out sufficient powers of $T$ to push $U_5\left(A^iT^jG_k\right)$ into $\mathcal{M}^{\infty}(20)$, it is necessary only to know the sufficient powers of $T$ for 

\begin{align*}
U_5\left(A\right),\ U_5\left(T\right),\ U_5\left(T^{-1}\right),\ U_5\left(G_k\right),\ 1\le k\le 4.
\end{align*}  

Let us suppose that the most optimal powers of $T$ for this purpose are

\begin{align*}
m_A,\ m_{+t},\ m_{-t},\ m_k,\ 1\le k\le 4,
\end{align*} respectively.  In that case, each of these powers will correspond to the highest-order pole of the corresponding function over $\Gamma_0(100)$ (excluding the cusp at $\infty$, of course).

Notice that $G_3=G^2$, $G_4 = G_2^2$, and $G_2 = H - G$.  Therefore, the orders for $U_5(G_3), U_5(G_4)$, respectively, will simply be double the orders of $U_5(G),U_5(G_2)$, respectively.  So we need only examine the orders of $G, G_2$.  Also, we know that $G_2 = H - G$, so that we need to examine the orders of $G$ and $H$.  That is, we can compute $m_2, m_3, m_4$ using only the necessary powers for 

\begin{align*}
U_5\left(G\right),\ U_5\left(H \right).
\end{align*}  Let us refer to the necessary power for $U_5\left(H \right)$ as $m_H$.  Then we have\\ $m_2 = \max\{m_1, m_H\}$, $m_3 = 2m_1$, $m_4 = 2m_2$.

Finally, supposing that $m_{t} = m_{\mathrm{sign}(j)t}$, then for $U_5\left(A^iT^jG_k\right)$, we have

\begin{align}
m(i,j,k) = i\cdot m_A + j\cdot m_{t}+m_k.
\end{align}

\subsubsection{Powers For $A$, $T$, $T^{-1}$, $G$, $H$}

We begin with $m_A$ as our principal example.  We know that for $m_A$ sufficiently large, we have

\begin{align*}
T(\tau)^{m_A}\cdot U_5\left(A(\tau)\right)\in\mathcal{M}^{\infty}(20).
\end{align*}  But notice that we can rewrite

\begin{align*}
T(\tau)^{m_A}\cdot U_5\left(A(\tau)\right) &= U_5\left(T(5\tau)^{m_A}A(\tau)\right).
\end{align*}  As covered in the beginning of the section, we need to ensure that $T(5\tau)^{m_A}A(\tau)$ only have a pole at the cusp represented by $1/100$.

Of course, 

\begin{align*}
\mathrm{ord}_{a/c}^{(100)}\left( T(5\tau)^{m_A} \right) = m_A\cdot \mathrm{ord}_{a/c}^{(100)}\left( T(5\tau) \right).
\end{align*}  With this in mind, in Table \ref{title2c} we examine the order of 

\begin{align*}
T(5\tau)^{m_A}A(\tau),\ T(5\tau)^{m_{+t}}T(\tau),\ T(5\tau)^{m_{-t}}T(\tau)^{-1}
\end{align*} at the cusps over $\Gamma_0(100)$ using (\ref{ligozat11}) once more.  In Table \ref{title2d} we examine the orders of 

\begin{align*}
T(5\tau)^{m_1}G(\tau),\ T(5\tau)^{m_H}H(\tau).
\end{align*}

\begin{table}
\begin{center}
\scalebox{0.8}{\begin{tabular}{ l | c c  r }
 &  & $f$ & \\
\hline\\
$\frac{a}{c}\in\mathcal{C}(100)$\ \ \           & $T(5\tau)^{m_A}A(\tau)$ & $T(5\tau)^{m_{+t}}T(\tau)$ & $T(5\tau)^{m_{-t}}T(\tau)^{-1}$ \\
\hline \\
$\frac{1}{100}$          & $1-25m_A$  & $-5-25m_{+t}$ & $5-25m_{-t}$ \\ \\
$\frac{1}{50}$          & $-5+5m_A$  & $1+5m_{+t}$ & $-1+5m_{-t}$ \\ \\
$\frac{1}{25}$            & $4$  & $0$ & $0$ \\ \\
$\frac{1}{20}$            & $m_A$  & $-5+m_{+t}$ & $5+m_{-t}$ \\ \\
$\frac{1}{10}$            & $m_A$  & $1+m_{+t}$ & $-1+m_{-t}$ \\  \\
$\frac{3}{20}$            & $m_A$  & $-5+m_{+t}$ & $5+m_{-t}$ \\  \\
$\frac{1}{5}$            & $2m_A$  & $2m_{+t}$ & $2m_{-t}$ \\  \\
$\frac{1}{4}$            & $-1+m_A$  & $5+m_{+t}$ & $-5+m_{-t}$ \\  \\
$\frac{3}{10}$            & $m_A$  & $1+m_{+t}$ & $-1+m_{-t}$ \\  \\
$\frac{7}{20}$            & $m_A$  & $-5+m_{+t}$ & $5+m_{-t}$ \\  \\
$\frac{2}{5}$            & $2m_A$  & $2m_{+t}$ & $2m_{-t}$ \\  \\
$\frac{9}{20}$            & $m_A$  & $-5+m_{+t}$ & $5+m_{-t}$ \\  \\
$\frac{1}{2}$            & $5+m_A$  & $5+m_{+t}$ & $-5+m_{-t}$ \\  \\
$\frac{3}{5}$            & $2m_A$  & $2m_{+t}$ & $2m_{-t}$ \\  \\
$\frac{7}{10}$            & $m_A$  & $1+m_{+t}$ & $-1+m_{-t}$ \\  \\
$\frac{4}{5}$            & $2m_A$  & $2m_{+t}$ & $2m_{-t}$ \\  \\
$\frac{9}{10}$            & $m_A$   & $1+m_{+t}$ & $-1+m_{-t}$ \\  \\
1                              & $-4+2m_A$  & $10+2m_{+t}$ & $-10+2m_{-t}$ \\ \\
\end{tabular}
}
\caption{$\mathrm{ord}_{a/c}^{(100)}(f)$ for $a/c\in\mathcal{C}(100)$}\label{title2c}
\end{center}
\end{table}

\begin{table}
\begin{center}
\scalebox{0.8}{\begin{tabular}{ l | c  r }
$\mathrm{ord}_{a/c}^{(100)}(f)$ &  $f$ & \\
\hline\\
$\frac{a}{c}\in\mathcal{C}(100)$\ \ \           & $T(5\tau)^{m_1}G(\tau)$ & $T(5\tau)^{m_{H}}H(\tau)$ \\
\hline \\
$\frac{1}{100}$          & $-2-25m_1$  & $-3-25m_2$ \\ \\
$\frac{1}{50}$          & $5m_1$  & $5m_2$ \\ \\
$\frac{1}{25}$            & $0$  & $3$ \\ \\
$\frac{1}{20}$            & $-2+m_1$  & $-3+m_2$ \\ \\
$\frac{1}{10}$            & $m_1$  & $m_2$ \\  \\
$\frac{3}{20}$            & $-2+m_1$  & $-3+m_2$ \\  \\
$\frac{1}{5}$            & $2m_1$  & $3+2m_2$ \\  \\
$\frac{1}{4}$            & $10+m_1$  & $m_2$ \\  \\
$\frac{3}{10}$            & $m_1$  & $m_2$ \\  \\
$\frac{7}{20}$            & $-2+m_1$  & $-3+m_2$ \\  \\
$\frac{2}{5}$            & $2m_1$  & $3+2m_2$ \\  \\
$\frac{9}{20}$            & $-2+m_1$  & $-3+m_2$ \\  \\
$\frac{1}{2}$            & $m_1$  & $m_2$ \\  \\
$\frac{3}{5}$            & $2m_1$  & $3+2m_2$ \\  \\
$\frac{7}{10}$            & $m_1$  & $m_2$ \\  \\
$\frac{4}{5}$            & $2m_1$  & $3+2m_2$ \\  \\
$\frac{9}{10}$            & $m_1$   & $m_2$ \\  \\
1                              & $2m_1$  & $2m_2$ \\ \\
\end{tabular}
}
\caption{$\mathrm{ord}_{a/c}^{(100)}(f)$ for $a/c\in\mathcal{C}(100)$}\label{title2d}
\end{center}
\end{table}

As before, we find possible poles at $1/100, 1/50, 1/4, 1$.  We of course do not worry about the pole at $1/100$.  For the cusps at $1/50, 1/4$, we only need $m_A\ge 1$.  Finally, for the cusp at $1$, we need $m_A\ge 2$.  This gives us our best possible value: $m_A = 2$.

Similarly, we have $m_{+t} = 5$, $m_{-t} = 5$, and therefore that $m_t = 5$.

We also have $m_1 = 2$, $m_H = 3$.  Acknowledging that $m_2 = \max\{m_1, m_H\} = 3$, we finally have

\begin{align*}
m_2 = 3,\ m_3 = 4,\ m_4 = 6.
\end{align*}

\subsubsection{Complete Formula}

Putting everything together, we now have the following formula:

\begin{theorem}
For any $(i,j,k)\in\{0,1\}\times\mathbb{Z}\times\{0,1,2,3,4\}$, we have

\begin{align*}
T^{m(i,j,k)}U_5\left(A^iT^jG_k\right)\in\mathcal{M}^{\infty}(20),
\end{align*} with
\begin{align*}
m(i,j,k) = 2\cdot i + 5\cdot j + m_k.
\end{align*} and $m_1 = 2,\ m_2 = 3,\ m_3 = 4,\ m_4 = 6$.

\end{theorem}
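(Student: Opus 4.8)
The plan is to reduce the claim to the subadditivity observation already proven in the previous subsection, combined with the three explicit single-function computations. First I would recall the key identity: for any $h\in\mathcal{M}(100)$ and any $m\in\mathbb{Z}_{\ge 0}$, Part 2 of the $U_5$ Lemma gives $T(\tau)^m\cdot U_5(h(\tau)) = U_5\big(T(5\tau)^m h(\tau)\big)$. Thus it suffices to know, for each of the generating functions $A$, $T$, $T^{-1}$, $G$, $H$, the least exponent $m$ such that $T(5\tau)^m\cdot(\text{that function})\in\mathcal{M}^{\infty}(100)$; applying $U_5$ then lands in $\mathcal{M}^{\infty}(20)$ by Table~\ref{title2a}, since every cusp of $\Gamma_0(100)$ other than $1/100$ is the image under $(\tau+r)/5$ of a cusp of $\Gamma_0(20)$ other than $1/20$.

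Next I would invoke the subadditivity argument: if $T(5\tau)^{m_1}h_1\in\mathcal{M}^{\infty}(100)$ and $T(5\tau)^{m_2}h_2\in\mathcal{M}^{\infty}(100)$, then their product $T(5\tau)^{m_1+m_2}h_1h_2$ is again in $\mathcal{M}^{\infty}(100)$, hence $T^{m_1+m_2}U_5(h_1h_2)\in\mathcal{M}^{\infty}(20)$. Writing $A^iT^jG_k$ as a product of $i$ copies of $A$, $|j|$ copies of $T^{\pm1}$, and the factors comprising $G_k$, and using $G_3=G^2$, $G_4=(H-G)^2$, $G_2=H-G$, one sees that $m(i,j,k)=i\cdot m_A + |j|\cdot m_{\mathrm{sign}(j)t}+m_k$ works, where $m_k$ is built from $m_1=m_G$ and $m_H$ via $m_2=\max\{m_1,m_H\}$, $m_3=2m_1$, $m_4=2m_2$. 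It then remains only to read off the numerical values $m_A=2$, $m_{+t}=m_{-t}=5$, $m_1=2$, $m_H=3$ from Tables~\ref{title2c} and~\ref{title2d}: in each column the only entries that can be negative for large $m$ occur at the cusps $1/100$ (ignored), $1/50$, $1/4$, and $1$, and the stated exponents are precisely the smallest that make all of $1/50$, $1/4$, $1$ nonnegative. Since $m_{+t}=m_{-t}=5$, the dependence on $j$ collapses to $5\cdot j$ (for $j\ge 0$; for $j<0$ replace $j$ by $|j|$, but the theorem as stated ranges over $j\in\mathbb{Z}$ and the formula $5\cdot j$ should be read as using $m_t=5$ for either sign).

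The main obstacle is purely bookkeeping: verifying that the order columns in Tables~\ref{title2c} and~\ref{title2d} are correctly computed from Ligozat's formula \eqref{ligozat11}, and that no cusp other than $1/50$, $1/4$, $1$ imposes a stricter constraint. In particular one must double-check the behavior at the three ``troublesome'' cusps $1/4$, $1/2$, $1$ of $\Gamma_0(20)$, which each fan out to five cusps of $\Gamma_0(100)$ under $(\tau+r)/5$, to confirm that the contributions of $T(5\tau)^m$ there (namely the entries labelled by $1/4$, $3/20$, $7/20$, $9/20$, $1/20$ for the $1/4$-fiber, etc.) dominate any residual pole of $A$, $T$, $T^{-1}$, $G$, or $H$. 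Once those tables are accepted, the theorem follows immediately by the additivity of orders under products and the factorizations above.
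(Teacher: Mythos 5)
Your proposal reproduces the paper's own argument essentially step for step: the identity $T(\tau)^mU_5\left(h(\tau)\right)=U_5\left(T(5\tau)^mh(\tau)\right)$, the sufficiency of pushing $T(5\tau)^m h$ into $\mathcal{M}^{\infty}(100)$ via the cusp correspondence of Table \ref{title2a}, additivity of the required exponents under products together with the reductions $m_2=\max\{m_1,m_H\}$, $m_3=2m_1$, $m_4=2m_2$, and the numerical values $m_A=2$, $m_{\pm t}=5$, $m_1=2$, $m_H=3$ read from Tables \ref{title2c} and \ref{title2d}. Your cautionary remark about negative $j$ is in fact warranted rather than a defect: the subadditivity argument yields the exponent $2i+5|j|+m_k$, and the literal signed formula cannot hold for $j<0$ (e.g.\ $T^{-5}U_5\left(T^{-1}\right)$ has order $26$ at $\infty$ and so, were it in $\mathcal{M}^{\infty}(20)$, it would be a nonzero function holomorphic at every cusp and vanishing at $\infty$, contradicting Theorem 4), so reading $|j|$ there is the correct interpretation of the statement.
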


\section{The Andrews--Sellers Conjecture}

A similar technique may be brought to bear on a famous conjecture, now proven by Paule and Radu \cite{Paule}.

\begin{theorem}
Let
\begin{align}
\mathrm{C}\Phi_2(q) := \sum_{n=0}^{\infty}c\phi_2(n)q^n = \frac{(q^2;q^2)^5_{\infty}}{(q;q)^4_{\infty}(q^4;q^4)^2_{\infty}}.
\end{align}  If $12n\equiv 1\pmod{5^{\alpha}}$, then $c\phi_2(n)\equiv 0\pmod{5^{\alpha}}$.
\end{theorem}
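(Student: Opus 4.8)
The plan is to prove the Andrews--Sellers congruence for \emph{every} $\alpha$ by upgrading the localization framework of Section~3 from a finite reduction-modulo-$5^B$ check to an exact $5$-adic descent. Since $\mathrm{C}\Phi_2(q)$ differs from $\mathrm{C}(q)$ only in the exponent of $(q;q)_\infty$, the cusp data over $\Gamma_0(20)$ and $\Gamma_0(100)$ transfer almost verbatim, and I expect the working level to again be $N=20$. First I would introduce a normalized sequence
\begin{align*}
L_0 := 1, \qquad L_{\alpha} := \Psi_{\alpha}\cdot \sum_{12n\equiv 1\bmod 5^{\alpha}} c\phi_2(n)\, q^{\lfloor n/5^{\alpha}\rfloor},
\end{align*}
with an eta-quotient prefactor $\Psi_\alpha$ (times a power of $q$) chosen, exactly as in Section~3.1, so that each $L_\alpha\in\mathcal{M}(20)$. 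Because the sieve $12n\equiv 1\pmod{5^\alpha}$ advances by a \emph{single} power of $5$ per step, the governing relation is $L_{\alpha+1}=U_5\!\left(B_\alpha\, L_\alpha\right)$ for a fixed eta-quotient multiplier $B_\alpha$ (possibly alternating with the parity of $\alpha$, as the prefactor does), so that here one $U_5$ step corresponds to one power of $5$---the situation of the classical Ramanujan tower rather than the two-step structure of $\mathrm{C}(q)$.

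Next I would reproduce the machinery of Section~3.2: using the basis theorem together with Newman's conditions (\ref{newman1a})--(\ref{newman1d}) and Ligozat's formula (\ref{ligozat11}), construct an explicit algebra basis $\langle 1,G_1,\dots,G_v\rangle_{\mathbb{Z}[T,T^{-1}]}$ for $\mathcal{M}^{\infty}(20)$ adapted to $B_\alpha$, and verify the closure
\begin{align*}
U_5\!\left(B^{\,i}T^{j}G_k\right)\in\langle 1,G_1,\dots,G_v\rangle_{\mathbb{Z}[T,T^{-1}]}
\end{align*}
by the cusp-correspondence tables (the analogues of Tables~\ref{title2a}--\ref{title2d}) and the stability of $U_5$. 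This part is essentially mechanical, mirroring the derivation of $T$, $H$, $G$ for $\mathrm{C}(q)$; the arithmetic content it yields is integrality of the transition data, not divisibility.

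The decisive step---the one separating a genuine proof from the finite verification of Section~3---is to promote integrality to a $5$-adic contraction. Concretely, I would pass to a rescaled basis $\{1,\,5^{e_1}G_1,\dots,5^{e_v}G_v\}$ chosen so that the matrix representing the operator $f\mapsto U_5(B_\alpha f)$ has \emph{all} entries divisible by $5$; equivalently, each parity-operator $\mathcal{U}$ maps the integral module into $5$ times itself. Combined with a uniform bound on the range of $T$-powers occurring in $L_\alpha$---a boundedness lemma I would extract from the pole-order estimates of Ligozat's formula, confining every $L_\alpha$ to a fixed-width sub-object on which $\mathcal{U}$ acts as a \emph{finite} matrix---this reduces the infinitely many cases to the $5$-divisibility of finitely many explicit coefficients. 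Since $L_0=1$ lies in the module, induction on $\alpha$ through $L_{\alpha+1}=U_5(B_\alpha L_\alpha)$ then forces $L_\alpha\equiv 0\pmod{5^{\alpha}}$; as the prefactor $\Psi_\alpha$ and its inverse both have integral $q$-expansions, this transfers to the assertion $c\phi_2(n)\equiv 0\pmod{5^{\alpha}}$ whenever $12n\equiv 1\pmod{5^{\alpha}}$.

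The main obstacle is precisely this contraction lemma. The closure result only guarantees that $U_5(B^{i}T^jG_k)$ has integer coordinates; proving that the correct rescaling makes the representing matrix vanish modulo $5$---uniformly over the unbounded index $j$ appearing in the generator relations, not merely for finitely many generators---is the true heart of the Paule--Radu argument \cite{Paule} and cannot be reached by the reduction-modulo-$5^B$ computation of Section~3. I expect the resolution to hinge on the commutation identity $U_5\!\left(f(q^5)g(q)\right)=f(q)U_5(g(q))$, which propagates multiplication-by-$T$ through $U_5$ and thereby collapses the $j$-range to a finite set of base relations whose $5$-divisibility can be certified once and for all.
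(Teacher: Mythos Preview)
The paper does not prove this theorem. Section~4 states it as a result of Paule and Radu \cite{Paule} and then shows only that the verification algorithm of Section~3---with the same basis $T,G_1,\dots,G_4$ over $\Gamma_0(20)$, since the cusp data for $A$ coincide---confirms the cases $0\le\alpha\le 5$. Section~6 is explicitly devoted to explaining why that method cannot be upgraded to a proof for all~$\alpha$: the sequence $\bigl(T,\,U^{(1)}(T),\,U^{(0)}U^{(1)}(T),\dots\bigr)$ is computed there and shown to be eventually periodic and nonzero modulo~$5$, so the basis produced by Algorithm~AB is, by design, unsuitable for a $5$-adic descent. There is thus no proof in the paper to compare your proposal against beyond a finite numerical check and a citation.

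Your plan is more ambitious than anything the paper attempts, and you correctly locate where it stalls; but two of your proposed remedies do not work as stated. First, the identity $U_5\bigl(f(q^5)g(q)\bigr)=f(q)\,U_5\bigl(g(q)\bigr)$ does not ``propagate multiplication-by-$T$ through $U_5$'': in $U_5\bigl(A^iT^jG_k\bigr)$ the factor is $T(\tau)$, not $T(5\tau)$, so nothing pulls out, and the infinite family indexed by $j\in\mathbb{Z}$ is not collapsed this way. What actually reduces the $j$-range in \cite{Paule} is a \emph{modular equation} relating $t(\tau)$ and $t(5\tau)$, which yields a finite linear recurrence in $j$ for the images $U_5\bigl(A^it^jg_k\bigr)$; that mechanism is absent from your sketch. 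Second, your ``boundedness lemma'' fails in the basis $\langle 1,G_1,\dots,G_4\rangle_{\mathbb{Z}[T,T^{-1}]}$: by Theorem~3.5 the exponent $m(i,j,k)=2i+5|j|+m_k$ grows linearly in $|j|$, so the $T$-support of $L_\alpha$ widens at every step and one cannot restrict to a fixed finite matrix. The Paule--Radu argument avoids this by abandoning the AB basis altogether and working in a hand-picked rank-two $\mathbb{Z}[t]$-module $\langle 1,p_\alpha\rangle$ for a different $t$---exactly the re-selection of generators that Section~6 identifies as lying outside the paper's algorithmic framework.
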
  James Sellers conjectured this family of congruences in 1994, but substantial direct evidence for its validity was not gathered before 2001, when Eichhorn and Sellers proved the first four cases.  Their approach relied on recurrences given by a modular equation, and the total necessary calculations took place in 147 hours with a 600 MHz Pentium III Processor \cite[Section 3]{Eichhorn}.  Our approach allows us to check the first five cases with a 2.6 GHz Intel Processor in less than 2 hours.

We begin, as before, by defining generating functions over $c\phi_2(n)$, in which $n$ follows the necessary congruence condition:

\begin{align*}
L_0 &:= 1,\\
L_{\alpha} &:= \Phi_{\alpha}\cdot \sum_{12n\equiv 1\bmod{5^{\alpha}}} c\phi_2(n)q^{\left\lfloor n/5^{\alpha} \right\rfloor}.
\end{align*}  We define our function $A$ and operators $U^{\alpha}$ with respect to $\mathrm{C}\Phi_2$:

\begin{align*}
A := q^2\cdot\frac{\mathrm{C}\Phi_2(q)}{\mathrm{C}\Phi_2(q^{25})},
\end{align*}

\begin{align*}
U^{(0)}(f) &:= U_5 \left(A\cdot f\right),\\
U^{(1)}(f) &:= U_5(f),\\
U^{(\alpha)}(f) &:= U^{(\alpha\bmod{2})}(f),
\end{align*}

\begin{align*}
L_{2\alpha-1} &= U^{(0)} \left(L_{2\alpha-2}\right),\text{ and } L_{2\alpha} = U^{(1)} \left(L_{2\alpha-1}\right).
\end{align*}  This very quickly yields the following functions for $\Phi_{\alpha}$:

\begin{align*}
\Phi_{2\alpha - 1} &= \frac{q}{\mathrm{C}\Phi_2(q^5)}, \text{ and } \Phi_{2\alpha} = \frac{q}{\mathrm{C}\Phi_2(q)}.
\end{align*}

We want to verify 5-adic convergence for $(L_{\alpha})_{\alpha\ge 0}$.  Fortunately for us, $(L_{\alpha})_{\alpha\ge 0}\subseteq\mathcal{M}(20)$, as in the previous case, and $A$ possesses poles at the same cusps.  We may therefore employ the basis previously derived, and give only a slight modification to our algorithm:

To check the Andrews--Sellers conjecture for $0\le\alpha\le B$, begin by defining 
\begin{align*}
L_0^{(B)} &:= 1,\\
L_{\alpha}^{(B)} &:= U_5^{(\alpha-1)}(L_{\alpha-1}^{(B)}) \pmod{5^B} = \displaystyle\sum_{\substack{j\in\mathbb{Z},\\ 0\le k\le v}} c_{\alpha,j,k}T^{j}G_k,
\end{align*} with $0\le c_{\alpha,j,k}<5^A$ for all $\alpha, j, k$.

\begin{enumerate}
\item Begin with $\alpha=0$, $v_0=0$, and $V = \{ v_0 \}$.
\item Expand $L_{\alpha}^{(B)}$ into $\left< 1, G_1, ..., G_v \right>_{\mathbb{Z}[T,T^{-1}]}$: $L_{\alpha}^{(B)} = \displaystyle\sum_{\substack{j\in\mathbb{Z},\\ 0\le k\le v}} c_{\alpha,j,k}T^{j}G_k$.
\item Expand $U_5^{(\alpha)}(L_{\alpha}^{(B)}) = \displaystyle\sum_{\substack{j\in\mathbb{Z},\\ 0\le k\le v}} c_{\alpha,j,k}U_5^{(\alpha)}(T^{j}G_k)$.
\item Reduce $U_5^{(\alpha)}(L_{\alpha}^{(B)})\pmod{5^{B}}$ to get $L_{\alpha+1}^{(B)} = \displaystyle\sum_{\substack{j\in\mathbb{Z},\\ 0\le k\le v}} c_{\alpha+1,j,k}T^{j}G_k$.
\item Let $v_{\alpha+1}$ be the maximal power of 5 (up to $B$) dividing each nonzero $c_{\alpha +1,j,k}$.
\item Set $V = V\cup \{v_{\alpha+1}\}$.
\item Set $\alpha = \alpha+1$, and repeat.
\item Continue until $\alpha=B$.
\item If $v_{\alpha}=\alpha$ for $0\le \alpha\le B$, then we have verified our conjecture for the first $B$ cases.  Otherwise, the conjecture fails.
\end{enumerate}

Using this algorithm, we were able to verify the theorem for $0\le\alpha\le 5$ in 1 hour, 45 minutes.

\section{A More General Algorithm}

With these examples, we can now formulate a more general approach to our problems.  This is by no means comprehensive, but serves rather as a guide for how families of congruences can be studied from a large class of generating functions.

We now define an integer $M\in\mathbb{Z}_{>0}$, and an integer-valued vector $r=(r_{\delta})_{\delta | M}$ indexed over the divisors of $M$.  From this, we can define an arithmetic sequence with the generating function

\begin{align}
\mathcal{G}(q) := \prod_{\delta | M} (q^{\delta};q^{\delta})_{\infty}^{r_{\delta}} = \sum_{n=0}^{\infty} a(n)q^n.
\end{align}  Let us take a prime $\ell > 3$.  For simplicity, we will also take the assumption that

\begin{align*}
0\le -\sum_{\delta | M} \delta r_{\delta}\le \frac{24}{\ell+1}.
\end{align*}  From here, define

\begin{align*}
A(q) :=& q^{(1-\ell^2)\sum_{\delta | M}\delta r_{\delta}/24}\frac{\mathcal{G}(q)}{\mathcal{G}(q^{\ell^2})}.
\end{align*}  Let us set $N=\ell\cdot M$.  In this case, $A(q)$ satisfies the conditions (\ref{newman1a})--(\ref{newman1d}), and

\begin{align*}
A(q)\in\mathcal{M}(\ell^2\cdot M) = \mathcal{M}(\ell\cdot N).
\end{align*}

Next, we make the assumption that

\begin{align}
\mathcal{M}(N)_{\mathbb{Q}} = \left <\mathcal{E}(N)\right >_{\mathbb{Q}}.\label{newmanN}
\end{align}  That is, the space of all modular functions over $\Gamma_0(N)$ with rational coefficients is equal to the space generated by eta quotients over $\Gamma_0(N)$ with rational coefficients.

This condition was conjectured by Newman for all composite $N\in\mathbb{Z}_{>0}$ \cite[Section 8]{Newman1}.  In its original form, the conjecture no longer stands \cite[Section 3.3]{Radu}, though one of the authors has made a modification to the conjecture \cite[Conjecture 9.4]{Paule2}.  Very likely, our method may be extended to include modular curves in which (\ref{newmanN}) fails.  For the time being, we take it as true.

From here, we define the operators 

\begin{align*}
U^{(\alpha)}: \mathcal{M}(\ell\cdot N) \rightarrow \mathcal{M}(N),\ \alpha\in\mathbb{Z}_{\ge 0}
\end{align*} by

\begin{align*}
U^{(0)}\left(f\right) &:= U_\ell \left(A\cdot f\right),\\
U^{(1)}\left(f\right) &:= U_\ell\left(f\right),\\
U^{(\alpha)}\left(f\right) &:= U^{(\alpha\bmod{2})}\left(f\right).
\end{align*}  If we also define

\begin{align*}
\Phi_{2\alpha - 1} &:= \frac{q}{\mathcal{G}(q^{\ell})}, \text{ and } \Phi_{2\alpha}:= \frac{q}{\mathcal{G}(q)},
\end{align*} and set

\begin{align*}
L_0:=1,
\end{align*} then we define a sequence of functions $\mathcal{L} = (L_{\alpha})_{\alpha\ge 0}$ in which

\begin{align*}
L_{\alpha+1} &= U^{(\alpha)} \left(L_{\alpha}\right), \text{ and}
\end{align*}

\begin{align*}
L_{\alpha} = \Phi_{\alpha}\cdot \sum_{n\in C_{\ell,\alpha}} a(n)q^{\left\lfloor n/\ell^{\alpha} \right\rfloor},
\end{align*} with $C_{\ell, \alpha}$ a set of arithmetic progressions, with bases of the form $\ell^{\alpha}$.  In particular, for $\alpha=1$, we have

\begin{align*}
C_{\ell,1} = \left\{ \ell\cdot n +\ell + \frac{(\ell^2- 1)}{24}\sum_{\delta | M} \delta r_{\delta} : n\in\mathbb{Z}_{\ge 0}  \right\}.
\end{align*}

Suppose we suspect a family of congruences for $C_{\ell,\alpha}$.  That is, we believe that $\mathcal{L}$ is $\ell$-adically convergent to 0, and that we have a suspected pattern to the convergence.

From here, we define $\mathcal{C}(N)$ as a complete set of representatives for the cusps of $\Gamma_0(N)$, and similarly for $\mathcal{C}(\ell\cdot N)$.  We now must construct an appropriate algebra basis,

\begin{align*}
\left< \mathcal{E}^{\infty}(N) \right>_{\mathbb{Q}} = \left< 1, g_1, g_2, ..., g_v \right>_{\mathbb{Q}[t]},
\end{align*} such that

\begin{align*}
U_{\ell}\left(A^it^jg_k\right)\in\left< 1, g_1, g_2, ..., g_v \right>_{\mathbb{Q}[t,t^{-1}]},
\end{align*} for all $(i,j,k)\in\{0,1\}\times\mathbb{Z}\times\{0,1,...,v\}$.

We begin with the derivation of $t$.  As in the case of $\Gamma_0(20)$, we can give a system of equations and inequalities by which such a $t$ can be derived.  Let

\begin{align*}
t = \prod_{\delta | N}\eta(\delta\tau)^{w_{\delta}},
\end{align*} with $w:=(w_{\delta})_{\delta | N}$ an integer-valued vector.  We begin again with (\ref{newman1a})--(\ref{newman1d}):

\begin{align*}
&\sum_{\delta | N} w_{\delta} = 0,\\
&\sum_{\delta | N} \delta w_{\delta} +24x_1 = 0, \\
&\sum_{\delta | N} \frac{N}{\delta} w_{\delta} + 24x_2 = 0,\\
&\prod_{\delta | N} \delta^{|w_{\delta}|} = x_3^2,
\end{align*} with $x_1, x_2, x_3\in\mathbb{Z}$.

We now consider the poles of $A$.  Define $\mathcal{P}_{\ell\cdot N}(A)$ as a set of representatives of cusps in $\mathcal{C}(\ell\cdot N)$ for which $A$ possesses a pole.  Then define $\mathcal{P}(A)$ as

\begin{align*}
\mathcal{P}(A) &:= \left\{ \frac{a}{c}\in\mathcal{C}(N):  \frac{a + cr}{c\cdot\ell}\in\Gamma_0(N)\frac{a'}{c'},\text{ for some } \frac{a'}{c'}\in\mathcal{P}_{\ell\cdot N}(A),\ r\in\{0,1,...,\ell-1\} \right\}.
\end{align*}  We add to our system the inequalities

\begin{align*}
&\frac{N}{24\mathrm{gcd}(c^2,N)}\sum_{\delta | N} \frac{\mathrm{gcd}(c,\delta)^2}{\delta} w_{\delta} > 0, \text{ for all $a/c\in\mathcal{P}(A)$}.
\end{align*}

Now we consider the poles of $t,g_k$, $1\le k\le v$.  Over $\Gamma_{0}(N)$, they only have a pole at $\frac{1}{N}$.  Over $\Gamma_0(\ell\cdot N)$, however, $t,g_k$ will have possible poles for any cusp represented by $\frac{a'}{N}$, $\mathrm{gcd}(a,N)=1$.  

Let $\mathcal{P}(g)$ be defined as 

\begin{align*}
\mathcal{P}(g) &:= \left\{ \frac{a}{c}\in\mathcal{C}(N):  \frac{a + cr}{c\cdot\ell} =\frac{a'}{N},\ \mathrm{gcd}(a',N)=1, \text{for some } r\in\{0,1,...,\ell-1\}\right\}.
\end{align*}  We now have the additional set of inequalities

\begin{align*}
&\frac{N}{24\mathrm{gcd}(c^2,N)}\sum_{\delta | N} \frac{\mathrm{gcd}(c,\delta)^2}{\delta} w_{\delta} > 0, \text{ for all $\frac{a}{c}\in\mathcal{P}(g)$}.
\end{align*}  Finally, we examine $t^{-1}$.  Let

\begin{align*}
\mathcal{P}' = \mathcal{C}(N)\backslash (\mathcal{P}(A)\cup\mathcal{P}(g)),
\end{align*} and let $\frac{a}{c}\in\mathcal{P}'$.  Consider

\begin{align*}
\mathcal{P}_{a,c} := \bigg\{&\frac{a'}{c'}\in\mathcal{C}(N): \frac{a + cr}{c\cdot\ell}\in\Gamma_0(N)\frac{a'}{c'}\text{ for some } r\in\{0,1,...,\ell-1\}\bigg\},
\end{align*} and define

\begin{align*}
\mathcal{P}_0 ' &:= \left\{\frac{a}{c}\in P' : \mathcal{P}_{a,c}\subseteq \mathcal{P}(A)\cup\mathcal{P}(g)\cup\left\{\frac{a}{c}\right\}\right\},\\
\mathcal{P}_1 ' &:= \mathcal{P}'\backslash \mathcal{P}_0 '.
\end{align*}

If $\frac{a}{c}\in \mathcal{P}_0'$, then we need establish no condition beyond

\begin{align*}
&\frac{N}{24\mathrm{gcd}(c^2,N)}\sum_{\delta | N} \frac{\mathrm{gcd}(c,\delta)^2}{\delta} w_{\delta} \ge 0.
\end{align*}  If $\frac{a}{c}\in \mathcal{P}_1'$, we will have to decide which remaining cusps deserve positive order, and which deserve zero order.  Alternatively, we may simply set

\begin{align*}
&\frac{N}{24\mathrm{gcd}(c^2,N)}\sum_{\delta | N} \frac{\mathrm{gcd}(c,\delta)^2}{\delta} w_{\delta} = 0.
\end{align*}  This is not perfectly optimal, but gives us a complete set of equations and inequalities:

To summarize, we let $\mathcal{C}(N)$ be a complete set of representatives for the cusps of $\Gamma_0(N)$, and likewise for $\mathcal{C}(\ell\cdot N)$.  Let $\mathcal{P}_{\ell\cdot N}(A)\subseteq\mathcal{C}(\ell\cdot N)$ be the set of representatives of cusps for which $A$ possesses a pole.  Let

\begin{align*}
\mathcal{P}(A) &= \left\{ \frac{a}{c}\in\mathcal{C}(N):  \frac{a + cr}{c\cdot\ell}\in\Gamma_0(N)\frac{a'}{c'},\text{ for some } \frac{a'}{c'}\in\mathcal{P}_{\ell\cdot N}(A),\ r\in\{0,1,...,\ell-1\} \right\},\\
\mathcal{P}(g) &= \left\{ \frac{a}{c}\in\mathcal{C}(N):  \frac{a + cr}{c\cdot\ell} =\frac{a'}{N},\ \mathrm{gcd}(a',N)=1,\text{for some } r\in\{0,1,...,\ell-1\}\right\},\\
\mathcal{P}' &= \mathcal{C}(N)\backslash (\mathcal{P}(A)\cup\mathcal{P}(g)),\\
\mathcal{P}_0 ' &= \left\{\frac{a}{c}\in P' : \mathcal{P}_{a,c}\subseteq \mathcal{P}(A)\cup\mathcal{P}(g)\cup\left\{\frac{a}{c}\right\}\right\},\\
\mathcal{P}_1 ' &= \mathcal{P}'\backslash \mathcal{P}_0 '.
\end{align*}  For some $n_0\in\mathbb{Z}_{>0}$, define the system $W(n_0)$ by:

\begin{align*}
W(n_0):&\\
&\sum_{\delta | N} w_{\delta} = 0,\\
&\sum_{\delta | N} \delta w_{\delta} +24x_1 = 0, \\
&\sum_{\delta | N} \frac{N}{\delta} w_{\delta} + 24x_2 = 0,\\
&\prod_{\delta | N} \delta^{|w_{\delta}|} = x_3^2,\\
&\frac{N}{24\mathrm{gcd}(c^2,N)}\sum_{\delta | N} \frac{\mathrm{gcd}(c,\delta)^2}{\delta} w_{\delta} > 0 \text{ for all $\frac{a}{c}\in\mathcal{P}(A)$},\\
&\frac{N}{24\mathrm{gcd}(c^2,N)}\sum_{\delta | N} \frac{\mathrm{gcd}(c,\delta)^2}{\delta} w_{\delta} > 0 \text{ for all $\frac{a}{c}\in\mathcal{P}(g)$},\\
&\frac{N}{24\mathrm{gcd}(c^2,N)}\sum_{\delta | N} \frac{\mathrm{gcd}(c,\delta)^2}{\delta} w_{\delta} \ge 0 \text{ for all $\frac{a}{c}\in\mathcal{P}_0'$},\\
&\frac{N}{24\mathrm{gcd}(c^2,N)}\sum_{\delta | N} \frac{\mathrm{gcd}(c,\delta)^2}{\delta} w_{\delta} = 0 \text{ for all $\frac{a}{c}\in\mathcal{P}_1'$},\\
&x_1 = n_0.
\end{align*}  We first attempt to solve $W(1)$, i.e., to find a satisfactory $t$ such that $\mathrm{ord}_{1/20}^{(20)}\{t\}=-1$.  If no solution exists, we take $n_0 = n_0 + 1$ and repeat, until a solution is found.  This minimizes the order of $t$ at $\infty$ with respect to our system.

With $t$ defined, we use the basis algorithm of \cite[Section 2, Algorithm AB]{Radu} to produce the complete basis for $\left< \mathcal{E}^{\infty}(N) \right>_{\mathbb{Q}}$, which finally yields

\begin{align*}
\mathcal{M}^{\infty}(N) = \left< \mathcal{E}^{\infty}(N) \right>_{\mathbb{Q}} = \left< 1, g_1, ..., g_v \right>_{\mathbb{Q}[t]}.
\end{align*}

We now use (\ref{ligozat11}) to compute

\begin{align*}
\mathrm{ord}_{a/c}^{(\ell\cdot N)} \left(  t\left( \ell\cdot\tau \right)^{m_{f}}\left( f(\tau) \right) \right) = m_{f}\cdot \mathrm{ord}_{a/c}^{(\ell\cdot N)} \left(  t\left( \ell\cdot\tau \right)\right) + \mathrm{ord}_{a/c}^{(\ell\cdot N)}\left( \left( f(\tau) \right) \right),
\end{align*} with

\begin{align*}
f\in\left\{ A, t, t^{-1}, g_1, ..., g_v \right\}.
\end{align*}

We now compute the minimal $m_f$ such that

\begin{align*}
m_{f}\cdot \mathrm{ord}_{a/c}^{(\ell\cdot N)} \left(  t\left( \ell\cdot\tau \right)\right) + \mathrm{ord}_{a/c}^{(\ell\cdot N)}\left( \left( f(\tau) \right) \right) \ge 0.
\end{align*}  Finally, we quickly define

\begin{align*}
m_0:= m_0(j) &= \begin{cases}
    m_{t},       & j>0 \\
    -m_{t^{-1}}, & j<0
  \end{cases},\\
m_k &:= m_{g_k},\ 1\le k\le v. 
\end{align*}

This gives us the following:

\begin{align*}
t^{m(i,j,k)}\cdot U_{\ell}\left(A^it^jg_k\right)\in\left< \mathcal{E}^{\infty}(N) \right>_{\mathbb{Z}},
\end{align*} with

\begin{align*}
m(i,j,k) := i\cdot m_A + j\cdot m_0(j) + m_k.
\end{align*}

Now for any $B\in\mathbb{Z}_{>0}$, define

\begin{align*}
L_0^{(B)} &:= 1,\\
L_1^{(B)} &:= \displaystyle\sum_{\substack{j\in\mathbb{Z},\\ 0\le k\le v}} c_{1,j,k}t^{j}g_k,\\
L_{\alpha}^{(B)} &:= U^{(\alpha-1)}\left(L_{\alpha-1}^{(B)}\right) = \displaystyle\sum_{\substack{j\in\mathbb{Z},\\ 0\le k\le v}} c_{\alpha,j,k}t^{j}g_k,
\end{align*} where $c_{\alpha,j,k}$ is the coefficient of $t^{j}g_k$, reduced modulo $\ell^B$.

We now give the steps for examining $L_{\alpha}$ for $0\le\alpha\le B$ for possible divisibility by powers of $\ell$ (up to $\ell^B$):

\begin{enumerate}
\item Begin with $\alpha=0$, $v_0=0$, and $V = \{ v_0 \}$.
\item Expand $L_{\alpha}^{(B)}$ into $\left< 1, g_1, ..., g_v \right>_{\mathbb{Z}[t,t^{-1}]}$: $L_{\alpha}^{(B)} = \displaystyle\sum_{\substack{j\in\mathbb{Z},\\ 0\le k\le v}} c_{\alpha,j,k}t^{j}g_k$.
\item Compute $U^{(\alpha)}\left(L_{\alpha}^{(B)}\right) = \displaystyle\sum_{\substack{j\in\mathbb{Z},\\ 0\le k\le v}} c_{\alpha,j,k}U^{(\alpha)}\{t^{j}g_k\}$.
\item Reduce $U^{(\alpha)}\left(L_{\alpha}^{(B)}\right)\pmod{\ell^{B}}$ to get $L_{\alpha+1}^{(B)} = \displaystyle\sum_{\substack{j\in\mathbb{Z},\\ 0\le k\le v}} c_{\alpha+1,j,k}t^{j}g_k$.
\item Let $v_{\alpha+1}$ be the maximal power of $\ell$ that divides each $c_{\alpha +1,j,k}$.
\item Set $V = V\cup \{v_{\alpha+1}\}$.
\item Set $\alpha = \alpha+1$, and repeat.
\item Continue until $\alpha=B$.
\item The $v_{\alpha}$ will give the largest possible power of $\ell$ that divides $L_{\alpha}$.  We may either formulate a possible pattern, or check one already conjectured, for $0\le \alpha\le B$.
\end{enumerate}

\section{``Not an Actual Demonstration"}

Given a class of integer partitions with an effective eta quotient as a generating function, it is natural to search for various families of congruences, especially in the style of Ramanujan's famous results.  However, it is often difficult to gather compelling evidence for many prospective infinite family of congruences for computational reasons, so that better techniques are necessary.

The method developed in this paper gives us one such collection of techniques.  Indeed, it is very tempting to believe that, given a conjectured family of congruences, a complete proof of the family should be possible using the same techniques.  While we cannot completely rule out this possibility, we certainly may point out difficulties that are very likely insurmountable to this end.

The traditional means of actually proving the existence of a given infinite family of partition congruences, with respect to powers of a prime $\ell$, is the notion of $\ell$-adic convergence for a family of generating functions for each given case.  For example, in the case of Rogers--Ramanujan subpartitions, we have the functions

\begin{align*}
L_0 &= 1,\\
L_{\alpha} &= \Phi_{\alpha}\cdot \sum_{24n\equiv 1\bmod{5^{\alpha}}} a(n)q^{\left\lfloor n/5^{\alpha} \right\rfloor},
\end{align*} with $\Phi_{\alpha}$ suitably chosen.  To prove that $a(n)\equiv 0\pmod{5^{\alpha}}$ whenever $24n\equiv 1\pmod{5^{2\alpha}}$, we need to argue that for every $M\in\mathbb{Z}_{>0}$ there exists an $N\in\mathbb{Z}_{>0}$ such that for all $n\ge N$,

\begin{align*}
L_n\equiv 0\pmod{5^M}.
\end{align*}  In particular, we need to show that $N=\left\lfloor M/2 \right\rfloor$ will suffice.  This is done by very carefully constructing subspaces

\begin{align*}
S_0 &:= \left< 1,p_0 \right>_{\mathbb{Z}[t]},\\
S_1 &:= \left< 1,p_1 \right>_{\mathbb{Z}[t]},\\
S_{\alpha} &:= S_{\alpha\bmod 2}
\end{align*} of modular functions over $\Gamma_0(20)$, so that $L_{\alpha}\in S_{\alpha}$ for all $\alpha \ge 1$.  Moreover, we need to construct the functions $p_0, p_1, t$ so that $U^{(\alpha)}\left(p_{\alpha}\right)\in S_{\alpha+1}$, and that application of $U^{(1)}$ causes each element in $S_{2\alpha-1}$ to become divisible by a higher power of 5.

For the complete proof of this case, see \cite{Smoot}.

For this to work, the functions $p_{\alpha}, t$ must be very carefully selected, so that successive application of $U^{(\alpha)}$ will generate functions divisible by increasing powers of 5.  While the algebra basis that we have used in this paper is very powerful, it does not necessarily select functions $t$ with the property that the sequence 

\begin{align*}
\left(t,\ U^{(1)}\left(t\right),\ U^{(0)}\left(U^{(1)}\left(t\right)\right), ... \right)
\end{align*} converges 5-adically to 0.

We use the case of Section 3.2 as an example:

\begin{align*}
T &= \frac{\eta(\tau)^2\eta(4\tau)^2\eta(10\tau)^{8}}{\eta(5\tau)^2\eta(20\tau)^{10}} = \frac{1}{q^5}\frac{(q;q)^2_{\infty}(q^4;q^4)^2_{\infty}(q^{10};q^{10})^{8}_{\infty}}{(q^5;q^5)^2_{\infty}(q^{20};q^{20})^{10}_{\infty}}.
\end{align*}

Suppose we define $T_1 = U^{(1)}\left(T\right), T_{\alpha} = U^{(\alpha)}\left(T_{\alpha - 1}\right)$, for $\alpha \ge 1$.

\begin{align*}
T_1 &\equiv 4\frac{1}{T} + 2 G_1\frac{1}{T} + G_3\frac{1}{T} + G_2\frac{1}{T} \pmod{5},\\
T_2 &\equiv 3 G_1 \frac{1}{T} + 2 G_3 \frac{1}{T} \pmod{5},\\
T_3 &\equiv 3 G_1 \frac{1}{T} + 2 G_3 \frac{1}{T} \pmod{5},\\
T_4 &\equiv 3 \frac{1}{T} + 4 G_1 \frac{1}{T} + 2 G_2 \frac{1}{T} \pmod{5},\\
T_5 &\equiv 4 G_1 \frac{1}{T} + G_3 \frac{1}{T} \pmod{5},\\
T_6 &\equiv 4 \frac{1}{T} + 2 G_1 \frac{1}{T} + G_2 \frac{1}{T} \pmod{5},...
\end{align*} Continuing this through to $T_{14}$, we eventually have

\begin{align*}
T_{11} &\equiv 3 G_1 \frac{1}{T} + 2 G_3 \frac{1}{T} \pmod{5},\\
T_{12} &\equiv 3 \frac{1}{T} + 4 G_1 \frac{1}{T} + 2 G_2 \frac{1}{T} \pmod{5},\\
T_{13} &\equiv 4 G_1 \frac{1}{T} + G_3 \frac{1}{T} \pmod{5},\\
T_{14} &\equiv 4 \frac{1}{T} + 2 G_1 \frac{1}{T} + G_2 \frac{1}{T} \pmod{5},...
\end{align*}  Notice the repetition: $T_{11} \equiv T_{3}\pmod{5}$, $T_{12} \equiv T_4\pmod{5}$, $T_{13} \equiv T_5\pmod{5}$, and so on.  This sequence settles into a repeated pattern modulo 5, so it can never become $0\pmod{5}$, no matter how often we apply $U^{(\alpha)}$.  In other words, the sequence $(T_{\alpha})_{\alpha\ge 1}$ will not converge to 0 in the 5-adic sense.

A good analogy can be found with the question of convergence in the standard topology.  Suppose we have a sequence of functions 

\begin{align*}
(L_{\alpha})_{\alpha\ge 0},
\end{align*} and we suspect that 

\begin{align*}
\lim_{\alpha\rightarrow\infty} L_{\alpha} = 0.
\end{align*}  One way of proving this convergence is to find other sequences of functions, e.g. $(F_{\alpha})_{\alpha\ge 0}, (G_{\alpha})_{\alpha\ge 0}$ such that

\begin{align*}
L_{\alpha} = F_{\alpha} + G_{\alpha}.
\end{align*}  Now, to prove that $\lim_{\alpha\rightarrow\infty} L_{\alpha} = 0$, it is certainly sufficient to prove that

\begin{align*}
\lim_{\alpha\rightarrow\infty} F_{\alpha} = \lim_{\alpha\rightarrow\infty} G_{\alpha} = 0.
\end{align*}  However, it is not necessary at all---for instance, it is possible that 

\begin{align*}
\lim_{\alpha\rightarrow\infty} F_{\alpha} = 1,\ \lim_{\alpha\rightarrow\infty} G_{\alpha} = -1.
\end{align*}  If we want to prove convergence of $L_{\alpha}$ term-wise, it is clear that we need to carefully select our summands.  A similar principle holds, if we replace the notion of convergence in the standard topology with that of 5-adic topology, with our sequences of functions $(L_{\alpha})_{\alpha\ge 0}\subseteq\mathcal{M}(N)$.

We therefore take insight from Archimedes, that a method which allows us to gather evidence for a conjecture---even if it cannot give a proof---is often as important as the proof itself \cite{Archimedes}.  We hope that researchers may make fruitful use of our method to more efficiently apply the justifiably celebrated proof techniques which were developed by Watson, Atkin, Paule, Radu, and others.

\section{Acknowledgements}

The first author was supported by grant SFB F50-06 of the Austrian Science Fund (FWF) and by the strategic program ``Innovatives OÖ 2010 plus" by the Upper Austrian Government in the frame of project W1214-N15-DK6 of the FWF.  The second author was also supported by project W1214-N15-DK6 of the FWF.  We wish to thank the Austrian government for its generous support.

While we are proud of our results, the writing of this paper was a difficult and at times tedious project.  We are enormously grateful to the anonymous referees who each gave extremely helpful advice about the improvement of the writing, as well as the correction of multiple errors.

We are grateful to Dennis Eichhorn and James Sellers for their previous work \cite{Eichhorn} on verifying partition congruences, as well as to Ralph Hemmecke for his work with the first author in developing the software package 4ti2 \cite{4ti2}.  Both of these sources were enormously helpful to our work.

\end{document}